\newtheorem{theorem}{Theorem}[section]
\newtheorem{prop}[theorem]{Proposition}
\newtheorem{lemma}[theorem]{Lemma}
\newtheorem{remark}[theorem]{Remark}
\newtheorem{claim}[theorem]{Claim}
\newtheorem{notation}[theorem]{Notation}
\newtheorem{definition}[theorem]{Definition}
\DeclareMathOperator{\ep}{\epsilon}
\numberwithin{equation}{section}
\def\pf{{\it Proof:}~}
\numberwithin{equation}{section}
\numberwithin{equation}{section}
\def\pf{{\it Proof:}~}
\def\pf{{\it Proof:}~}
\DeclareMathOperator{\p}{\partial}
\DeclareMathOperator{\lan}{\langle}
\DeclareMathOperator{\ran}{\rangle}
\DeclareMathOperator{\e}{\epsilon}
\numberwithin{equation}{section}
\def\pf{{\it Proof:}~}
\begin{document}

\title[The rigidity of eigenfunctions' gradient estimates]{The rigidity of eigenfunctions' gradient estimates}
\author{Guoyi Xu,  Xiaolong Xue}
\address{Guoyi Xu\\Department of Mathematical Sciences\\Tsinghua University, Beijing\\P. R. China, 100084}
\email{guoyixu@tsinghua.edu.cn}
\address{Xiaolong Xue\\ Department of Mathematical Sciences\\Tsinghua University, Beijing\\P. R. China}
\email{xxl19@mails.tsinghua.edu.cn}
\date{\today}
\date{\today}

\begin{abstract}
On compact Riemannian manifolds with non-negative Ricci curvature and smooth (possibly empty),  convex (or mean convex) boundary, if the sharp Li-Yau type gradient estimate of an Neumann (or Dirichlet) eigenfunction holds at some non-critical points of the eigenfunction; we show that the manifold is isometric to the product of one lower dimension manifold and a round circle (or a line segment).
\\[3mm]
Mathematics Subject Classification: {35J15,  58J05, 58J50. }
\end{abstract}
\thanks{G.Xu was partially supported by NSFC 12141103.}

\maketitle

\titlecontents{section}[0em]{}{\hspace{.5em}}{}{\titlerule*[1pc]{.}\contentspage}
\titlecontents{subsection}[1.5em]{}{\hspace{.5em}}{}{\titlerule*[1pc]{.}\contentspage}
\tableofcontents

\section{Introduction}\label{sec introduction}

For manifolds with non-negative Ricci curvature ($Rc\geq 0$), Cheeger-Gromoll's splitting theorem \cite{CG} says: the existence of geodesic line results in the isometric splitting of the manifold. It can be shown that the Busemann function with respect to the geodesic line is a harmonic function, whose gradient has constant norm. Then the isometry is induced by the gradient curve of this harmonic function. 

\begin{remark}\label{rem max of grad of harmonic func is obtained}
{Note the square norm of harmonic functions' gradient is a subharmonic function in manifolds with $Rc\geq 0$.  If the maximum norm of harmonic functions' gradient is obtained at some point $p\in M^n$,  then the strong maximum principle implies that the norm of harmonic functions' gradient is constant, which guarantees that the manifold isometrically splits. 

Therefore from the pure analytic view point, the above isometric splitting result can be viewed as the rigidity part of harmonic functions' gradient estimate.
}
\end{remark}

\begin{figure}[htbp]
        \centering
        \includegraphics[scale=1.1]{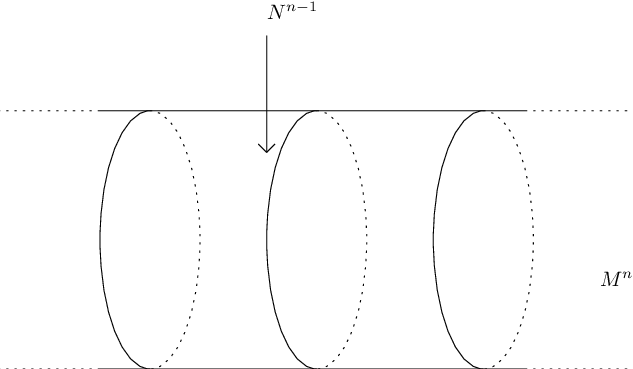}
        \caption{The Figure for Cheeger-Gromoll's splitting theorem with $M^n=N^{n-1}\times \mathbb{R}$}
        \label{The Figure for Cheeger-Gromoll's splitting theorem}
    \end{figure}

For the global positive Green's function, the sharp estimate and the related rigidity are known to Colding \cite{Colding}. For the global positive Green's function $G$ on a complete Riemannian manifold with nonnegative Ricci curvature, it is shown that $|\nabla (G^{\frac{1}{2 -n}})|\leq 1$, and $M^n$ is isometric to $\mathbb{R}^n$ if and only if the equality holds at some point in $M^n$. 

The harmonic functions in Cheeger-Gromoll's splitting theorem and Colding's rigidity are global. The study of local harmonic functions in manifolds with non-negative Ricci curvature, was initiated by Yau's Liouville theorem \cite{Yau} and  Cheng-Yau's gradient estimate for positive harmonic functions \cite{CY}, where the gradient of the logarithm of positive harmonic functions is considered. 

In \cite{Xu-growth},  the sharp Cheng-Yau type gradient estimate in two dimensional manifolds with $Rc\geq 0$ is obtained.  Furthermore,  the corresponding rigidity is established in \cite{HXY} as follows: the realization of maximum norm of positive harmonic function's gradient in geodesic disk implies, the geodesic disk is isometric to an Euclidean disk. However, the sharp Cheng-Yau type gradient estimate is unknown for higher dimension cases so far.



Li \cite{Li} and Li-Yau \cite{LY} applied the gradient estimates of eigenfunctions to study the bound of the eigenvalues on manifolds. The simplest model for eigenfunctions are sine or cosine functions. The Li-Yau type gradient estimate of eigenfunctions (the estimate of $|D\sin^{-1}u|$, where $u$ is one normalized eigenfunction of compact manifold) is firstly  obtained in \cite[Theorem $10$]{LY}. 

Through a delicate refined gradient estimate of eigenfunctions, Zhong-Yang \cite{ZY} firstly obtain the sharp lower bound of the first eigenvalue for compact Riemannian manifolds with $Rc\geq 0$.  Hang-Wang \cite{HW} answered Sakai's question in \cite{Sakai}, they showed that if the sharp lower bound of the $1$st eigenvalue is obtained, then manifold is isometric to a round circle. Zhong-Yang's type gradient estimate of eigenfunction is closely related to the solution of suitable ODE (also see \cite{Kroger} and \cite{AX}). There is also log-concavity property of the first eigenfunctions established by two-points method in \cite{AC} (also see \cite{Ni}). For Riemannian manifold satisfying $Rc\ge -(n-1)$, the sharp gradient estimate of the $1$st eigenfunction and the rigidity of sharp $1$st eigenvalue is studied in \cite{SW} (also see \cite{Li12}). 

Although Li-Yau type gradient estimate of eigenfunctions is not sharp enough to get the sharp lower bound of the first eigenvalue, the equality in the estimate can be achieved for model spaces; which splits a round circle factor isometrically. In this paper, we study the rigidity part with respect to Li-Yau type gradient estimates of eigenfunctions, which is partially motivated by Remark \ref{rem max of grad of harmonic func is obtained}. 

The main result of this paper is as follows, which can be viewed as an isometric splitting theorem for compact manifolds comparing Cheeger-Gromoll's splitting Theorem for complete non-compact manifolds. 
\begin{theorem}\label{thm splitting for eigen est}
{For compact Riemannian manifold $(M^n, g)$ with $Rc\geq 0$ and smooth (possibly empty) boundary . Assume $\Delta u= -\lambda u, \displaystyle \max_{x\in M^n}|u(x)|= 1$, where $\lambda> 0$; and $|\nabla \sin^{-1}u|^2(p)= \lambda$ holds at some point $p$, where $|u(p)|< 1$. Define $\mathcal{N}(u)= \#(M^n- u^{-1}(0))$ as the number of connected components of $M^n- u^{-1}(0)$ (the number of  nodal domains of $u$); then:
\begin{enumerate}
\item[(1)]. If $\partial M^n= \emptyset$, then $M^n$ is isometric to $\mathbb{S}^1(\frac{\mathcal{N}(u)}{2\sqrt{\lambda}})\times N^{n- 1}$, where $\mathbb{S}^1(\frac{\mathcal{N}(u)}{2\sqrt{\lambda}})$ is a round circle with radius $\frac{\mathcal{N}(u)}{2\sqrt{\lambda}}$ and $\mathcal{N}(u)= 2k$ for some $k\in \mathbb{Z}^+$;
\begin{figure}[htbp]
        \centering
        \includegraphics[scale=0.6]{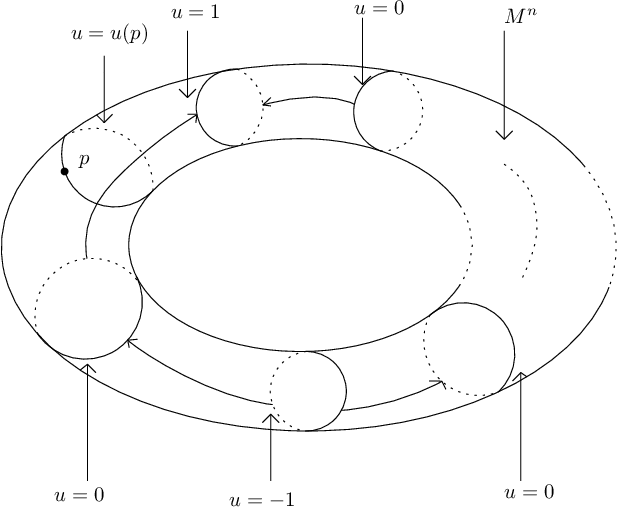}
        \caption{The Figure for Case (1) of Theorem 1.2}
        \label{The Figure for Case (1) of Theorem 1.2}
    \end{figure}
\item[(2)]. If $\partial M^n\neq \emptyset$ is convex and $\frac{\partial u}{\partial \vec{n}}\big|_{\partial M^n}= 0$, then $M^n$ is isometric to $[0, \frac{\mathcal{N}(u)- 1}{\sqrt{\lambda}}\cdot \pi]\times N^{n- 1}$ and $\mathcal{N}(u)\geq 2$;
\begin{figure}[htbp]
        \centering
        \includegraphics[scale=0.7]{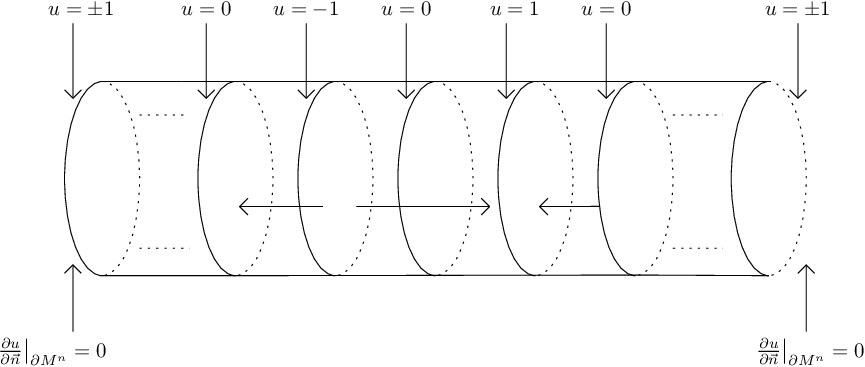}
        \caption{The Figure for Case (2) of Theorem 1.2}
        \label{The Figure for Case (2) of Theorem 1.2}
    \end{figure}
\item[(3)]. If $\partial M^n\neq \emptyset$ is mean convex and $u\big|_{\partial M^n}= 0$; then $M^n$ is isometric to $[0, \frac{\mathcal{N}(u)}{\sqrt{\lambda}}\cdot \pi]\times N^{n- 1}$ and $\mathcal{N}(u)\geq 1$;
\begin{figure}[htbp]
        \centering
        \includegraphics[scale=0.7]{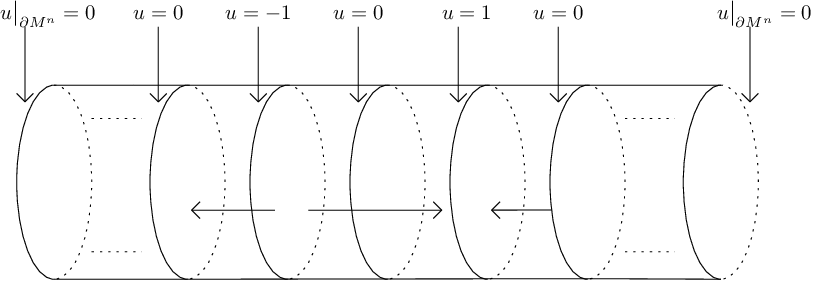}
        \caption{The Figure for Case (3) of Theorem 1.2}
        \label{The Figure for Case (3) of Theorem 1.2}
    \end{figure}
\end{enumerate}
where $N^{n- 1}$ is a compact Riemannian manifold with $Rc\geq 0$ and $\partial N^{n- 1}= \emptyset$.  
}
\end{theorem}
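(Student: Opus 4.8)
The plan is to extract a rigidity structure from the equality case of the Li-Yau gradient estimate, following the analytic philosophy of Remark \ref{rem max of grad of harmonic func is obtained}. First I would recall the proof of the Li-Yau estimate $|\nabla \sin^{-1} u|^2 \le \lambda$: one sets $v = \sin^{-1} u$ (locally, away from critical points and the set $\{|u|=1\}$), computes that $Q := |\nabla v|^2$ satisfies a Bochner-type differential inequality, and shows $Q \le \lambda$ via a maximum principle argument exploiting $Rc \ge 0$. The key is to track \emph{exactly} which terms must vanish when $Q(p) = \lambda$ at an interior point $p$ with $|u(p)| < 1$. Since $p$ is then an interior maximum of $Q$, the strong maximum principle applied to the Bochner inequality for $Q$ forces: (i) $Q \equiv \lambda$ on the nodal domain containing $p$, hence on all of $\{|u|<1\}$ by continuation; (ii) the Ricci term $Rc(\nabla v, \nabla v)$ vanishes identically; and (iii) the Hessian term degenerates, specifically $\nabla^2 v$ has rank one with $\nabla^2 v = -\sqrt{\lambda}\,(\tan(\sqrt{\lambda}\, v))\,\frac{\nabla v \otimes \nabla v}{|\nabla v|^2}$ or an equivalent one-dimensional form — in particular the traceless Hessian of $v$ restricted to $(\nabla v)^\perp$ vanishes and $\nabla v$ is an eigenvector of $\nabla^2 v$.

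Next I would convert this into a splitting. From $|\nabla v|^2 = \lambda$ constant we get that $|\nabla v| = \sqrt{\lambda}$ is nowhere zero on $\{|u|<1\}$, so $u$ has no interior critical values in $(-1,1)$; combined with compactness this forces the extremal values $u = \pm 1$ to be attained on a nonempty set, and the Hessian degeneracy shows the level sets of $v$ are totally geodesic and the flow of $\nabla v / |\nabla v|^2$ is an isometry between them. The function $w := \sqrt{\lambda}\,v = \sqrt{\lambda}\sin^{-1} u$ is an arclength parameter along the gradient flow, so $M^n$ (or each nodal domain) is locally a warped/Riemannian product $I \times N^{n-1}$; the degeneracy of the Hessian of $v$ off $\nabla v$ forces the product to be \emph{Riemannian} (untwisted), giving $M^n = I \times N^{n-1}$ with $N^{n-1}$ having $Rc \ge 0$ and, since the cross-sections are interior level sets, empty boundary. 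On this product $u(t,x) = \sin(\sqrt{\lambda}\, t + c)$, which pins down the eigenvalue relation and forces the period/length in terms of $\sqrt{\lambda}$.

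Finally I would count nodal domains to fix the global model in each of the three cases. In the closed case, $\mathbb{S}^1$ of circumference $L$ with $u = \sin(\sqrt{\lambda}\, t)$ requires $\sqrt{\lambda}\, L = 2\pi k$, and the zero set of $u$ has $2k$ points cutting $\mathbb{S}^1$ into $\mathcal{N}(u) = 2k$ arcs, giving $L = \pi \mathcal{N}(u)/\sqrt{\lambda}$ and radius $\mathcal{N}(u)/(2\sqrt{\lambda})$; for this to be consistent $\mathcal{N}(u)$ must be even. In the Neumann case the cross-section $N^{n-1}$ is closed and the factor is an interval $[0,\ell]$; the convexity of $\partial M^n$ together with the totally geodesic product structure forces the boundary to be exactly the two end cross-sections $\{0,\ell\}\times N^{n-1}$ (the second fundamental form of a slice is zero, so only slices can be convex boundary components), $u = \cos(\sqrt{\lambda}\, t)$ up to translation by the Neumann condition, the interior zeros of $u$ number $\mathcal{N}(u) - 1$, and $\ell = \pi(\mathcal{N}(u)-1)/\sqrt{\lambda}$ with $\mathcal{N}(u) \ge 2$. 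In the Dirichlet case, $u = \sin(\sqrt{\lambda}\, t)$ vanishes at both endpoints, the interior zeros number $\mathcal{N}(u) - 1$ so the total number of zero points is $\mathcal{N}(u)+1$ cutting $[0,\ell]$ into $\mathcal{N}(u)$ subintervals, giving $\ell = \pi\mathcal{N}(u)/\sqrt{\lambda}$ and $\mathcal{N}(u)\ge 1$; mean convexity of the boundary is automatic for the flat slices and imposes no further restriction.

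The main obstacle I anticipate is Step two: cleanly deducing the \emph{global} Riemannian product structure from the pointwise Hessian degeneracy. One must show the gradient flow of $\nabla v/|\nabla v|^2$ is complete up to the set $\{|u| = 1\}$, that this set is a single cross-section (not lower-dimensional), and that the de Rham-type splitting goes through across nodal hypersurfaces $\{u = 0\}$ where $v = \sin^{-1}u$ is only a local coordinate — i.e. patching the local products into one global product and simultaneously handling the boundary behaviour in cases (2) and (3). Controlling the geometry near $\{|u|=1\}$, where $\sin^{-1}$ degenerates, is the delicate technical point.
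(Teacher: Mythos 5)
Your outline matches the paper through the local‐splitting stage: constancy of $|\nabla\sin^{-1}u|^2$ on the component $M_p$ of $M^n\setminus\mathscr{C}(u)$ containing $p$ (strong maximum principle applied to the Bochner inequality for the relevant quantity, cf.\ Lemma \ref{lem Laplace of gradient of angles} and the Step (1) of Proposition \ref{prop the isometric map,4.23}), then vanishing of the Hessian of $\Theta=\frac{1}{\sqrt{\lambda}}\sin^{-1}u$ and integration of $\nabla\Theta$ to split $M_p\cong N^{n-1}\times I$. (A small error: with $v=\sin^{-1}u$ one gets $\Delta v=0$ and hence $\nabla^2 v\equiv 0$, not a rank-one Hessian with a $\tan$ factor; your formula doesn't trace to zero. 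Also, there is no issue at $\{u=0\}$: there $|\nabla u|=\sqrt{\lambda}\neq 0$, so $\sin^{-1}u$ is perfectly smooth; the entire difficulty is concentrated at $\{|u|=1\}$.) The nodal-counting in Step three is fine and agrees with the paper.

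The genuine gap is precisely the step you flag as ``the main obstacle I anticipate'' and then leave unresolved: extending the splitting across the critical set $\{|u|=1\}$, where $|\nabla u|=0$, $\sin^{-1}u$ degenerates, and the associated second-order operator becomes degenerate elliptic, so neither the classical strong maximum principle nor a naive de Rham continuation argument applies. This is not a routine technicality --- it is the central new content of the paper (Section \ref{sec splitting extension}). The paper's resolution is a Hopf-type boundary-point lemma adapted to the degenerate operator $\mathscr{L}_u$: one shows from the already-established product structure on the $M_p$ side that $N^{n-1}:=u^{-1}(-1)\cap\overline{M_p}$ is a smooth closed hypersurface, introduces Fermi coordinates $(x_1,x')$ in a collar $U_{r_0}(N^{n-1})$, Taylor-expands to get $u=-1+\tfrac{1}{2}\lambda x_1^2+O(x_1^4)$, so that the singular coefficients of $\mathscr{L}_u$ behave like $\tfrac{2}{x_1}$ and $-\tfrac{2}{x_1^2}$, and then exhibits an explicit barrier $v(x)=x_1^2-x_1$ with $\mathscr{L}_u(v)>0$ near $N^{n-1}$. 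Comparing $F=\tfrac{|\nabla u|^2}{1-u^2}-\lambda$ with $F+\epsilon v$ on the other side $U^-_{r_0}(N^{n-1})$ and using $\partial F/\partial x_1=0$ on $N^{n-1}$ forces $F=0$ somewhere in $U^-_{r_0}$, which restarts Proposition \ref{prop the isometric map,4.23} on the adjacent component; one then inducts on $\mathcal{N}(u)$. Without an argument of this kind you have only a splitting of the open region $M_p$ between two consecutive critical levels, not of $M^n$, so the global conclusions (1)--(3) do not follow. If you want to complete your approach, this degenerate Hopf lemma (or an equivalent continuation device across $\{|u|=1\}$) is the missing ingredient.
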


\begin{remark}\label{rem grad of curve}
{The gradient curve of Busemann function is used in \cite{CG72} to show the homeomorphism type of the manifolds. Furthermore the gradient curve of harmonic functions is used to obtain the isometric splitting in \cite{CG}. The proof of the above theorem relies on the gradient curve of $\sin^{-1}u$. However, because of the possible boundary of manifolds and the critical points of eigenfunctions, we firstly obtain a locally splitting result, then extend this splitting to the global manifold. 
}
\end{remark}

The organization of this paper is as follows.  In Section \ref{sec sharp est},  we firstly give an alternative proof of the sharp gradient estimate of $\sin^{-1}u$,  where $u$ is one normalized eigenfunction.  The sharp gradient estimate is known in equivalent form (see \cite{Li12}, \cite{Yang}),  however our method is uniform and consistent in dealing with the corresponding rigidity part later.

In Section \ref{sec local splitting}, we obtain the vanishing Hessian of $\sin^{-1}u$, then using the gradient curve of this function to show the locally splitting in the connected component without critical points of eigenfunctions.  

Finally we show the locally splitting can be extended to the globally splitting in Section \ref{sec splitting extension}. Note the canonical strong maximum principle (see\cite{GT}, \cite{PW}) requires the uniformly elliptic property of the differential operators. Because the elliptic operator related to $\sin^{-1}u$ is degenerate at the critical points of $u$,  we can not apply the canonical strong maximum principle directly.

The key observation is that using the Taylor expansion of the Riemannian metric and eigenfunction at the critical points,  we find a suitable make-up function,  which plays the similar role of the make-up function used in the proof of classical Hopf's Strong Maximum Principle (see \cite[Lemma $3.4$]{GT}).  Therefore,  we obtain the implication of Strong Maximum Principle for this degenerate elliptic operator,  which guarantees the extension of locally splitting.  

\begin{figure}[htbp]
        \centering
        \includegraphics[scale=0.7]{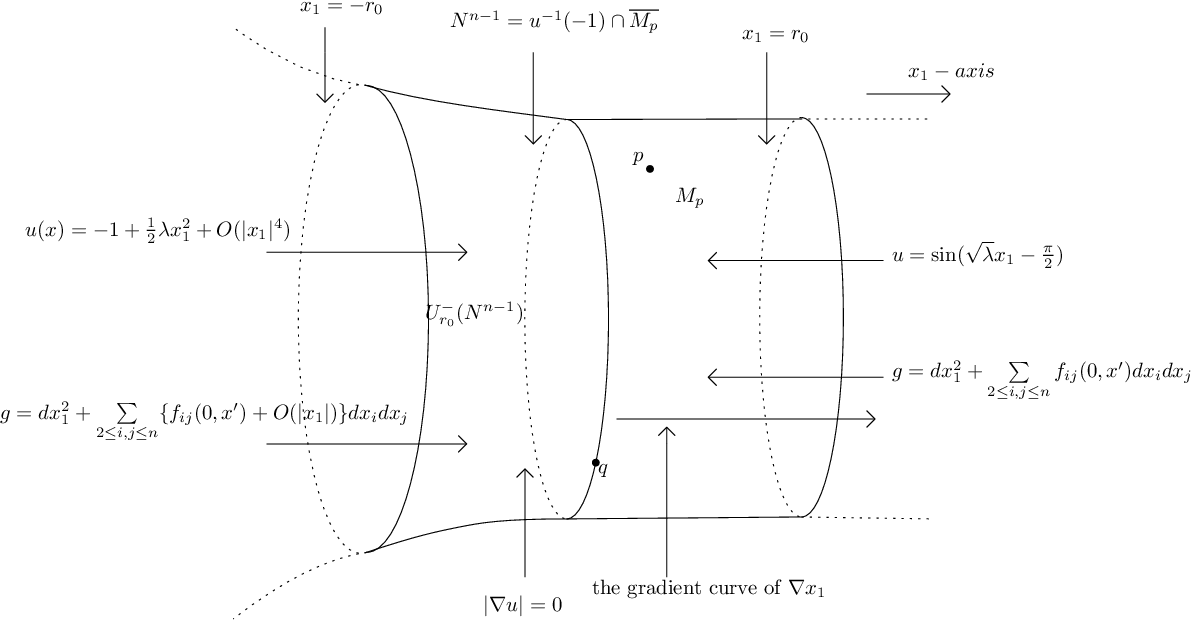}
        \caption{The Figure  of Theorem 4.1 from local splitting to global splitting}
        \label{The Figure  of Theorem 4.1 from local splitting to global splitti}
    \end{figure}

\section{The sharp gradient estimates for eigenfunctions}\label{sec sharp est}

We firstly study the behavior of eigenfunctions' gradient at the boundary, where Neumann boundary condition and smooth convex boundary (respectively Dirichlet boundary condition and smooth, mean convex boundary) are used. 

\begin{lemma}\label{lem boundary computation,dirichlet,1.22}
For compact Riemannian manifold $(M^n, g)$, assume $\Delta u= -\lambda u$ with $|u|\leq 1$, where $\lambda> 0$. If $\partial M^n$ is smoothly convex and $\frac{\partial u}{\partial \vec{n}}\big|_{\partial M^n}= 0$, or if $\partial M^n$ is smooth,  mean convex and $u\big|_{\partial M^n}= 0$; then  
\begin{align}
\frac{\p |\nabla \sin^{-1}u|^2}{\p \vec{n} }(x)\le 0, \quad \quad \quad \quad \forall x\in \partial M^n\ \text{satisfying}\ |u(x)|< 1; \nonumber 
\end{align}
where $\vec{n}(x)$ is the outward unit normal vector on $\p M^n$.
\end{lemma}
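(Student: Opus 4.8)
The plan is to compute $\frac{\partial}{\partial \vec n}\,|\nabla \sin^{-1}u|^2$ directly at a boundary point $x$ with $|u(x)|<1$, using a local adapted frame and the two boundary hypotheses. Write $\varphi = \sin^{-1}u$ so that $u = \sin\varphi$, and note that wherever $|u|<1$ the function $\varphi$ is smooth with $|\nabla u|^2 = \cos^2\varphi\,|\nabla\varphi|^2$. It is convenient to work with the quantity $|\nabla\varphi|^2 = \frac{|\nabla u|^2}{1-u^2}$; I would expand $\frac{\partial}{\partial\vec n}$ of this expression in terms of $\frac{\partial}{\partial\vec n}|\nabla u|^2$, $\frac{\partial u}{\partial\vec n}$ and $u$. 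First I would set up normal coordinates (or a geodesic boundary frame) at $x$: choose an orthonormal frame $\{e_1,\dots,e_{n-1}, e_n = \vec n\}$ near $x$ where $e_1,\dots,e_{n-1}$ are tangent to $\partial M^n$, and recall that $\langle \nabla_{e_i} e_j, \vec n\rangle = -\mathrm{II}(e_i,e_j)$ is the second fundamental form with respect to the outward normal, so mean convexity / convexity enters as a sign condition on $\mathrm{II}$.

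The two cases split as follows. In the Neumann case, $\frac{\partial u}{\partial\vec n}\equiv 0$ on $\partial M^n$, so $\nabla u$ is tangent to the boundary along $\partial M^n$; differentiating $\frac{\partial u}{\partial\vec n}=0$ tangentially gives $\mathrm{Hess}\,u(\vec n, e_i) = \mathrm{II}(e_i, \cdot)\cdot\nabla u$ type identities, and then $\frac{\partial}{\partial\vec n}|\nabla u|^2 = 2\,\mathrm{Hess}\,u(\nabla u, \vec n)$ reduces to a term of the form $-2\,\mathrm{II}(\nabla u,\nabla u) \le 0$ by convexity of the boundary. Since also $\frac{\partial u}{\partial\vec n}=0$ kills the term coming from differentiating the denominator $1-u^2$, one gets $\frac{\partial}{\partial\vec n}|\nabla\varphi|^2 \le 0$ directly. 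In the Dirichlet case, $u\equiv 0$ on $\partial M^n$ so all tangential derivatives of $u$ vanish on the boundary; hence along $\partial M^n$ one has $\nabla u = \frac{\partial u}{\partial\vec n}\,\vec n$, $|\nabla u|^2 = \big(\frac{\partial u}{\partial\vec n}\big)^2$, and $u=0$ so the denominator is just $1$. Then $\frac{\partial}{\partial\vec n}|\nabla\varphi|^2 = \frac{\partial}{\partial\vec n}|\nabla u|^2 = 2\,\frac{\partial u}{\partial\vec n}\,\mathrm{Hess}\,u(\vec n,\vec n)$, and I would use the eigenfunction equation $\Delta u = -\lambda u$ together with $u|_{\partial M^n}=0$ to rewrite $\mathrm{Hess}\,u(\vec n,\vec n) = \Delta u - \sum_i \mathrm{Hess}\,u(e_i,e_i)$; the tangential Hessian on $\partial M^n$ of a function vanishing on $\partial M^n$ contributes a mean-curvature term $-H\,\frac{\partial u}{\partial\vec n}$, which with mean convexity produces the desired sign, after checking that $\Delta u = -\lambda u = 0$ on the boundary.

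The main obstacle I expect is bookkeeping the boundary second-derivative identities correctly — specifically, getting the signs of $\mathrm{II}$ and the mean curvature $H$ consistent with the "outward normal" convention, and correctly identifying which second-derivative components of $u$ survive on $\partial M^n$ in each boundary-condition case. A secondary subtlety is making sure the computation is valid precisely at points where $|u(x)|<1$ (so $\varphi$ is smooth and the chain rule through $\sin^{-1}$ is legitimate) and that no hidden term with an indefinite sign appears from the cross terms when expanding $\frac{\partial}{\partial\vec n}\frac{|\nabla u|^2}{1-u^2}$; once the tangency/vanishing structure of $\nabla u$ on $\partial M^n$ is used, those cross terms should either vanish or collapse into the single convexity term. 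The differential-geometric identities needed (Codazzi-type relations for $\mathrm{Hess}\,u$ restricted to a hypersurface) are standard, so the argument is essentially a careful local computation rather than anything requiring new ideas.
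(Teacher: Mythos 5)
Your proposal is correct and follows essentially the same route as the paper: compute $\nabla_{\vec n}|\nabla u|^2$ in an adapted boundary frame, note that the derivative of the denominator $1-u^2$ vanishes in both boundary cases, and extract the sign from the second fundamental form in the Neumann case and from the mean curvature (after rewriting $\mathrm{Hess}\,u(\vec n,\vec n)$ using $\Delta u = -\lambda u = 0$ on $\partial M^n$) in the Dirichlet case. The only difference from the paper is notational — you phrase the convexity/mean-convexity input via $\mathrm{II}$ and $H$, whereas the paper writes $\langle\nabla_{\nabla u}\vec n,\nabla u\rangle$ and $\sum_i\langle\nabla_{e_i}e_i,\vec n\rangle$ directly.
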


\begin{proof}
Let $\{e_1, \cdots,e_{n-1},e_n\}$ be a local orthogonal  frame such that $\{e_1,\cdots,e_{n-1}\}$ are tangent to $\p M^n$ and $e_n=\vec{n}$ is the outward normal vector. 

In the rest argument, we assume $x\in \partial M^n$ satisfying $|u(x)|< 1$. If $\partial M^n$ is smooth,  convex and $\frac{\partial u}{\partial \vec{n}}\big|_{\partial M^n}= 0$, then at $x$,
\begin{align}
\nabla _{\vec{n}}|\nabla u|^2&=\sum_{i=1}^n2\nabla _{\vec{n}}\nabla _{e_i}u \cdot \nabla _{e_i}u=\sum_{i=1}^n-2(\nabla _{e_i}\vec{n})u\cdot \nabla _{e_i}u, \nonumber \\
&=\sum_{1\le i,j\le n}-2 \langle \nabla _{e_i}\vec{n}, e_j  \rangle
 \nabla _{e_j}u \cdot \nabla _{e_i}u=-2\lan \nabla _{\nabla u}\vec{n}, \nabla u  \ran \le 0. \nonumber 
\end{align}
by the convexity of $\p M^n$. 

If $\partial M^n$ is smooth,  mean convex and $u\big|_{\partial M^n}= 0$, then note $\Delta u(x)= -\lambda u(x)= 0$, we have  
\begin{equation}\nonumber 
\begin{split}
\nabla _{\vec{n}}|\nabla u|^2&=\sum_{i=1}^n 2\nabla _{\vec{n}}\nabla _{e_i}u\cdot \nabla _{e_i}u=2\nabla _{\vec{n}}\nabla _{\vec{n}}u\cdot \nabla _{\vec{n}}u= 2\nabla ^2u(e_n,e_n)\cdot \nabla _{\vec{n}}u\\
&=2( \Delta u-\sum_{i=1}^{n-1} \nabla ^2u(e_i,e_i)        )\cdot \nabla _{\vec{n}}u=-2\sum_{i=1}^{n-1} \nabla ^2u(e_i,e_i)\cdot \nabla _{\vec{n}}u\\
&=-2\sum_{i=1}^{n-1} ( \nabla _{e_i}\nabla _{e_i}u-(\nabla _{e_i}e_i)u            )\cdot \nabla _{\vec{n}}u=2\sum_{i=1}^{n-1} \lan \nabla _{e_i}e_i, \nabla u  \ran\cdot \nabla _{\vec{n}}u\\
&=  2\sum_{i=1}^{n-1} \lan \nabla _{e_i}e_i, e_n  \ran\cdot (\nabla _{\vec{n}}u)^2\le 0,
\end{split}
\end{equation} 
where the last inequality comes from the mean-convex property of $\p M^n$. 

Now we have 
\begin{equation}
\nabla _{\vec{n}}|\nabla \sin^{-1}u|^2(x)=\frac{ (\nabla _{\vec{n}}|\nabla u|^2)(1-u^2)-|\nabla u|^2(-2u \nabla _{\vec{n}}u)  }{(1-u^2)^2}=\frac{\nabla _{\vec{n}}|\nabla u|^2}{1-u^2}\leq 0. \nonumber 
\end{equation}
\end{proof}

\begin{notation}\label{notation ellip operator}
{For $u\in C^\infty(M^n)$,  for any domain $\Omega\subseteq M^n$ with $|\nabla u|(x)\neq 0,  |u(x)|< 1$ in $\Omega$;  we define $\mathscr{L}_u: C^\infty(\Omega)\rightarrow C^\infty(\Omega)$ as follows:
\begin{align}
&\mathscr{L}_u(f)(x)\vcentcolon= \Delta f(x)+ \nabla f(x)\cdot \Psi(x)- \frac{2|\nabla u|^2}{(1- u^2)^2}f(x),  \quad \quad \forall f\in C^\infty(\Omega);  \nonumber \\
&\Psi(x)= \frac{\nabla u}{1- u^2}\Big\{-\frac{1}{2}\frac{(1- u^2)^2}{|\nabla u|^4} \nabla u\cdot \nabla \Big( \frac{|\nabla u|^2}{1- u^2}(x)\Big)- 2u\Big\} . \label{formula for Psi}
\end{align}
}
\end{notation}

The following lemma is the key differential inequality for make-up function $F(x)= \frac{|\nabla u|^2}{1- u^2}(x)- \lambda$, which will be crucially used in the proof Theorem \ref{thm splitting for Neumann with boundary}. 
\begin{lemma}\label{lem Laplace of gradient of angles}
{Suppose $M^n$  is a compact Riemannian manifold with $Rc\ge 0$, let $u$ be an eigenfunction with $\Delta u=-\lambda u$ in $M^n $, where $\lambda>0$. For any $x\in M^n $ with $|u(x)|< 1$ and $|\nabla u(x)|\neq 0$, let $F(x)= \frac{|\nabla u|^2}{1- u^2}(x)- \lambda$, we have
\begin{align}
\mathscr{L}_u(F)(x)\geq 0, \nonumber 
\end{align}
}
\end{lemma}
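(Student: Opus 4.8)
The plan is to derive the differential inequality by a Bochner-type computation applied to $\varphi(x) = \frac{|\nabla u|^2}{1-u^2}(x)$, which is the natural quantity here since $|\nabla \sin^{-1} u|^2 = \frac{|\nabla u|^2}{1-u^2}$ and $F = \varphi - \lambda$. The first step is to compute $\Delta \varphi$ using the quotient rule and the Bochner formula $\frac{1}{2}\Delta |\nabla u|^2 = |\nabla^2 u|^2 + \langle \nabla u, \nabla \Delta u\rangle + Rc(\nabla u, \nabla u)$. Substituting $\Delta u = -\lambda u$ gives $\langle \nabla u, \nabla \Delta u\rangle = -\lambda |\nabla u|^2$, and $Rc \geq 0$ lets us drop the curvature term with the correct sign. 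The denominator contributes terms involving $\Delta(1-u^2) = -2u\Delta u - 2|\nabla u|^2 = 2\lambda u^2 - 2|\nabla u|^2$ and $|\nabla(1-u^2)|^2 = 4u^2|\nabla u|^2$, plus the cross term $\langle \nabla |\nabla u|^2, \nabla(1-u^2)\rangle = -2u\langle \nabla |\nabla u|^2, \nabla u\rangle$.

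**The second step** is to recognize that after assembling $\Delta \varphi$, the terms that are not manifestly non-negative must be absorbed into the first-order term $\nabla \varphi \cdot \Psi$ and the zeroth-order term $-\frac{2|\nabla u|^2}{(1-u^2)^2}\varphi$ appearing in $\mathscr{L}_u$. Indeed, the definition of $\Psi$ in \eqref{formula for Psi} is reverse-engineered precisely so that $\nabla \varphi \cdot \Psi$ reproduces the cross term and the $-2u$-type contributions: writing $\nabla \varphi = \frac{\nabla |\nabla u|^2}{1-u^2} + \frac{2u |\nabla u|^2 \nabla u}{(1-u^2)^2}$, one checks that $\langle \nabla |\nabla u|^2, \nabla u\rangle$ can be expressed via $\langle \nabla \varphi, \nabla u\rangle$. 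So the strategy is: express everything in $\Delta \varphi$ in terms of $\langle \nabla \varphi, \nabla u\rangle$, $|\nabla^2 u|^2$, $\varphi$, $|\nabla u|^2$, and $u$; then verify algebraically that $\Delta \varphi + \nabla \varphi \cdot \Psi - \frac{2|\nabla u|^2}{(1-u^2)^2}\varphi$ equals a sum of the manifestly non-negative Bochner term $\frac{2|\nabla^2 u|^2}{1-u^2}$ (and the dropped Ricci term) possibly with an additional non-negative remainder. Since $\mathscr{L}_u(F) = \mathscr{L}_u(\varphi) - \mathscr{L}_u(\lambda) = \mathscr{L}_u(\varphi) + \frac{2|\nabla u|^2}{(1-u^2)^2}\lambda$, and $\mathscr{L}_u(\varphi) = \Delta\varphi + \nabla\varphi\cdot\Psi - \frac{2|\nabla u|^2}{(1-u^2)^2}\varphi$, the constant term $-\lambda$ shifting in $F$ only helps: $\mathscr{L}_u(F)(x) = \mathscr{L}_u(\varphi)(x) + \frac{2\lambda|\nabla u|^2}{(1-u^2)^2} \geq \mathscr{L}_u(\varphi)(x)$, so it suffices to show $\mathscr{L}_u(\varphi) \geq -\frac{2\lambda |\nabla u|^2}{(1-u^2)^2}$, or more cleanly to identify the precise non-negative lower bound for $\mathscr{L}_u(\varphi)$.

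**The main obstacle** I anticipate is the bookkeeping: correctly tracking all the rational expressions in $u$ and $1-u^2$ through the Bochner computation, and verifying that the particular combination encoded in $\Psi$ exactly cancels the indefinite terms. This is the kind of computation where the definition \eqref{formula for Psi} looks opaque until one does the calculation backwards from the desired conclusion; I would organize it by first computing $\mathscr{L}_u(\varphi)$ with $\Psi$ left as $\frac{\nabla u}{1-u^2}\{A \nabla u \cdot \nabla \varphi + B\}$ for scalars $A, B$ to be determined, imposing that the coefficient of $\langle \nabla|\nabla u|^2, \nabla u\rangle$ vanishes and the coefficient of $u|\nabla u|^2$-type terms vanishes, and checking this forces $A = -\frac{1}{2}\frac{(1-u^2)^2}{|\nabla u|^4}$ and $B = -2u$ as in the statement. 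The one genuinely mathematical (as opposed to algebraic) input is dropping $Rc(\nabla u, \nabla u) \geq 0$, and keeping the sign-definite $|\nabla^2 u|^2 \geq 0$ term; everything else is forced. I would also double-check that no term with an indefinite sign (e.g.\ a stray $\langle \nabla \varphi, \nabla u\rangle$ that the $\Psi$ term does not fully absorb) survives — if one does, it must be paired with a Cauchy–Schwarz or Kato-type estimate $|\nabla^2 u|^2 \geq \frac{|\nabla |\nabla u|^2|^2}{4|\nabla u|^2}$, but I expect the construction of $\Psi$ to make this unnecessary.
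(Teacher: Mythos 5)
Your overall plan matches the paper's approach in outline — a Bochner computation on $\varphi = \frac{|\nabla u|^2}{1-u^2}$ via the quotient rule, with the $\Psi$ term meant to absorb cross terms and the Ricci term dropped by $Rc\ge 0$ — but your central expectation is wrong, and this is precisely the step where the proof lives or dies. You write that you ``expect the construction of $\Psi$ to make'' a Cauchy--Schwarz/Kato-type estimate ``unnecessary.'' It does not. After substituting the Bochner formula and the quotient-rule expansion into $\mathscr{L}_u(F)$, the $\lambda$-terms do cancel exactly (so the shift by $-\lambda$ is actually neutral, not merely helpful), but what remains is
\begin{align*}
\mathscr{L}_u(F) \;\geq\; \frac{2|\nabla^2 u|^2}{1-u^2} \;-\; \frac{2}{1-u^2}\Bigl(\frac{u\,|\nabla u|^2}{1-u^2} - \frac{(1-u^2)\,\nabla F\cdot\nabla u}{2|\nabla u|^2}\Bigr)^{\!2},
\end{align*}
i.e.\ the ``stray'' indefinite terms do survive and assemble into a perfect square with a \emph{negative} sign. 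Non-negativity of $\mathscr{L}_u(F)$ then hinges entirely on showing that $|\nabla^2 u|^2$ dominates that square; it cannot simply be discarded as a ``manifestly non-negative'' contribution.

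The paper's proof makes this the organizing step rather than an afterthought: working in a frame with $e_1 = \nabla u/|\nabla u|$, one multiplies $\nabla F$ by $e_1$ to solve for $u_{11}=\nabla^2 u(e_1,e_1)$ and finds that $u_{11}$ is exactly the quantity inside the square above (up to sign). The inequality $|\nabla^2 u|^2 \geq u_{11}^2$ — a special, weaker case of the Kato estimate you mention — is then the decisive input, on equal footing with $Rc\ge 0$. So the genuinely mathematical ingredients are two, not one: dropping $Rc(\nabla u,\nabla u)\ge 0$ \emph{and} retaining a specific diagonal component of the Hessian. Your fallback ($|\nabla^2 u|^2 \geq |\nabla|\nabla u|^2|^2/(4|\nabla u|^2)$) is in fact strong enough, since it implies $|\nabla^2 u|^2\ge \sum_j u_{1j}^2 \ge u_{11}^2$, so the repair is already in your toolbox — but you have misplaced it as a contingency rather than as the heart of the argument. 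As written, the proposal has a gap: carrying out your plan literally (compute $\Delta\varphi$, absorb cross terms into $\nabla\varphi\cdot\Psi$, drop the Hessian and Ricci terms) does not close, because the Hessian term cannot be dropped.
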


\pf
{Let $\{e_1,\cdots,e_n\} $ be an orthonormal basis at $T_{x}M$ with $\nabla _{e_i}e_j(x)=0$, $1\le i,j\le n$, where $e_1=\frac{\nabla u}{|\nabla u|}(x)$. Denote $u_i(x)=\nabla _{e_i}u(x)$ and $u_{ij}=\nabla _{e_j}\nabla _{e_i}u(x)$, then 
 $u_1(x)=|\nabla u|(x)$ and $u_i(x)=0$, where $2\le i\le n$. 

Direct computation yields
\begin{align}
\nabla F= 2|\nabla u| \Big(\frac{(\nabla ^2u) e_1}{1- u^2}+ \frac{u |\nabla u|^2  e_1}{(1- u^2)^2}\Big). \nonumber 
\end{align}

Multiplying both sides by $e_1$, then  we have
\begin{align}
u_{11}(x)= \frac{(1- u^2) (\nabla F) e_1}{2|\nabla u|} -\frac{|\nabla u|^2  u}{(1- u^2)}. \nonumber 
\end{align} 

And we get
\begin{align}
|\nabla ^2u|^2\geq u_{11}^2= \frac{|\nabla u|^4 u^2}{(1- u^2)^2}+ \frac{(1- u^2)\nabla F\cdot e_1}{2|\nabla u|} \Big\{ \frac{(1- u^2) \nabla F\cdot \nabla u}{2|\nabla u|^2}-2\frac{|\nabla u|^2 u}{1- u^2}\Big\} . \label{ineq of Hessian}
\end{align}


Direct computation reveals that
\begin{equation}
\Delta F= \Delta \frac{|\nabla u|^2}{1-u^2}=\frac{\Delta |\nabla u|^2}{1-u^2}-\frac{|\nabla u|^2\Delta (1-u^2)}{(1-u^2)^2}-2\nabla \frac{|\nabla u|^2}{1-u^2}\cdot \nabla \ln(1-u^2).\label{the laplace of F,10.18}
\end{equation} 

By the Bochner's formula, we know 
\begin{equation}
\label{the use about bochner,10.18}
\Delta |\nabla u|^2\ge 2|\nabla^2u|^2-2\lambda |\nabla u|^2.
\end{equation}

Plugging (\ref{the use about bochner,10.18}) and (\ref{ineq of Hessian}) into (\ref{the laplace of F,10.18}), we have
\begin{align}
\Delta F&\geq -2\nabla F\cdot \nabla [\ln(1- u^2)]+ \frac{2 \Big\{|\nabla u|^2(|\nabla u|^2- \lambda u^2)+ (1- u^2) (|\nabla ^2 u|^2- \lambda|\nabla u|^2)\Big\}}{(1- u^2)^2}\cdot \nonumber \\
&\ge \frac{2|\nabla u|^2}{(1- u^2)^2}F- \nabla F\cdot \Psi(x),
\end{align}
where $\Psi$ is defined as in (\ref{formula for Psi}).  Then the conclusion follows.
}
\qed

Now we are ready to show the sharp gradient estimate for (Neumann or Dirichlet) eigenfunctions in a uniform way. 

\begin{theorem}\label{thm LY estimate on mflds for neumann with boundary, 4.22}
For compact Riemannian manifold $(M^n, g)$ with $Rc\geq 0$, assume $\Delta u= -\lambda u, \displaystyle \max_{x\in M^n}|u(x)|= 1$, where $\lambda> 0$. If $\partial M^n$ is smooth,  convex and $\frac{\partial u}{\partial \vec{n}}\big|_{\partial M^n}= 0$, or if $\partial M^n$ is smooth,  mean convex and $u\big|_{\partial M^n}= 0$; then
\begin{align}
|\nabla u|^2+\lambda u^2\le \lambda. \nonumber 
\end{align}
\end{theorem}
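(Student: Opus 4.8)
The plan is to establish the equivalent inequality $F:=\dfrac{|\nabla u|^2}{1-u^2}-\lambda\le 0$ on the open set $U:=\{x\in M^n:\ |u(x)|<1,\ |\nabla u(x)|\neq0\}$. This is enough, since at a point with $\nabla u(x)=0$ one has $|\nabla u|^2+\lambda u^2=\lambda u^2\le\lambda$, while at a point with $|u(x)|=1$ the function $u^2$ attains its maximum value $1$, so $\nabla u(x)=0$ (immediate in the interior; at $\partial M^n$ the Neumann condition kills the normal part and the tangential part vanishes as well, whereas a Dirichlet boundary point cannot have $|u|=1$), and then $|\nabla u|^2+\lambda u^2=\lambda$. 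On $U$, Lemma \ref{lem Laplace of gradient of angles} gives $\mathscr{L}_u(F)\ge 0$, and the zeroth-order coefficient $-\dfrac{2|\nabla u|^2}{(1-u^2)^2}$ of $\mathscr{L}_u$ is \emph{strictly negative} on $U$; this sign is precisely what lets the maximum principle do its work.

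Arguing by contradiction, suppose $m:=\sup_U F>0$, pick $x_k\in U$ with $F(x_k)\to m$, and pass to a subsequence $x_k\to x_\infty\in M^n$. I would then dispose of each location of $x_\infty$. If $x_\infty$ is an interior point of $M^n$ lying in $U$, then $F$ has a positive interior maximum there, so $\nabla F(x_\infty)=0$, $\Delta F(x_\infty)\le0$, and $\mathscr{L}_u(F)(x_\infty)=\Delta F(x_\infty)-\dfrac{2|\nabla u|^2}{(1-u^2)^2}(x_\infty)\,m<0$, contradicting Lemma \ref{lem Laplace of gradient of angles}. If $x_\infty\in U\cap\partial M^n$, then $|u(x_\infty)|<1$ and $|\nabla u(x_\infty)|\neq0$, so $\mathscr{L}_u$ has smooth coefficients and is uniformly elliptic near $x_\infty$ and $F$ is smooth there with a positive boundary maximum; Hopf's boundary point lemma (applicable since the zeroth-order coefficient is $\le0$ and $F(x_\infty)=m>0$) gives $\dfrac{\partial F}{\partial\vec n}(x_\infty)>0$, but $F+\lambda=|\nabla\sin^{-1}u|^2$, so Lemma \ref{lem boundary computation,dirichlet,1.22} gives $\dfrac{\partial F}{\partial\vec n}(x_\infty)\le0$, a contradiction. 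If $x_\infty\notin U$ with $\nabla u(x_\infty)=0$ and $|u(x_\infty)|<1$, then $\dfrac{|\nabla u|^2}{1-u^2}(x_k)\to0$, so $F(x_k)\to-\lambda$, again impossible.

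The remaining case, $|u(x_\infty)|=1$, is the one I expect to be the main obstacle, because there $\mathscr{L}_u$ degenerates and $F$ need not even have a limit at $x_\infty$, so neither the strong maximum principle nor the Hopf lemma applies directly. Replacing $u$ by $-u$ if necessary, $x_\infty$ is a global maximum of $u$, hence $\nabla u(x_\infty)=0$ and the Hessian $H:=\nabla^2u(x_\infty)$ satisfies $H\le0$ (second-derivative test, valid in every admissible direction, including inward-pointing ones if $x_\infty\in\partial M^n$, because $u\le1$ throughout $M^n$) with $\operatorname{tr}H=\Delta u(x_\infty)=-\lambda$. A second-order Taylor expansion in geodesic normal coordinates $x=\exp_{x_\infty}(y)$ then gives $1-u^2=\langle(-H)y,y\rangle+O(|y|^3)$, $|\nabla u|^2=\langle H^2y,y\rangle+O(|y|^3)$ and $\lambda u^2-\lambda=\lambda\langle Hy,y\rangle+O(|y|^3)$, so to leading order $F(x)$ is the quotient $\dfrac{\langle H(H+\lambda I)y,y\rangle}{\langle(-H)y,y\rangle}$; since the eigenvalues $\mu_i\ge0$ of $-H$ sum to $\lambda$, each satisfies $\mu_i\le\lambda$, hence $H\le0\le H+\lambda I$, and as these matrices commute $H(H+\lambda I)\le0$. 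One concludes $\limsup_{x\to x_\infty,\,x\in U}F(x)\le0$, contradicting $F(x_k)\to m>0$; handling the higher-order error terms and the degenerate directions $y\in\ker H$ carefully is the delicate point of the whole proof, after which the contradiction closes the argument and $F\le0$ on $U$.
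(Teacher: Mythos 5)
Your case analysis handles the first three locations of $x_\infty$ correctly (interior max of $F$, boundary max with Hopf's lemma, interior critical point of $u$ with $|u|<1$), but the fourth case, $|u(x_\infty)|=1$, which you yourself flag as ``the delicate point of the whole proof,'' is a genuine gap rather than a technicality to be cleaned up. The quadratic Taylor ratio controls $F$ only along directions transverse to $\ker H$: when $y\to 0$ nearly tangent to $\ker H$, both $\langle H(H+\lambda I)y,y\rangle$ and $\langle(-H)y,y\rangle$ vanish to second order, the $O(|y|^3)$ remainders dominate, and the leading-order quotient you compute no longer describes $F$. Moreover $\ker H\neq 0$ is not a corner case but the one you actually care about---for the model $\mathbb{S}^1\times N^{n-1}$ the Hessian of $u$ at its maximum has rank one. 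Extracting the sign of $F$ along those degenerate directions would require higher-order information that $H$ alone does not supply, and I do not see how to close the argument along your line.

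The paper sidesteps the degeneracy entirely with a regularization, and that is the idea you are missing. Set $u_\epsilon=u/(1+\epsilon)$, still a $\lambda$-eigenfunction satisfying the same boundary condition, but now with $|u_\epsilon|<1$ on all of $M^n$, so that $F_\epsilon:=\frac{|\nabla u_\epsilon|^2}{1-u_\epsilon^2}-\lambda$ is smooth on the whole compact manifold. It attains an honest maximum at some $x_0$; if $x_0\in\partial M^n$, Lemma~\ref{lem boundary computation,dirichlet,1.22} together with the obvious inequality $\nabla_{\vec n}F_\epsilon(x_0)\ge 0$ forces $\nabla_{\vec n}F_\epsilon(x_0)=0$, so in every case $\nabla F_\epsilon(x_0)=0$ and $\nabla^2F_\epsilon(x_0)\le 0$. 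Plugging these into $\mathscr{L}_{u_\epsilon}F_\epsilon(x_0)\ge 0$ from Lemma~\ref{lem Laplace of gradient of angles} immediately gives $F_\epsilon(x_0)\le 0$, and letting $\epsilon\to 0$ finishes. That one substitution converts your delicate analysis near $\{|u|=1\}$ into a problem with no degenerate set at all, and the maximum principle argument becomes two lines.
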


\begin{remark}
{The Neumann case is originally proved in \cite{LY}, and the Dirichlet case is proved in \cite[Theorem $3.2$]{Yang}.
}
\end{remark}

\begin{proof}
\textbf{Step (1)}. Let  $u_{\ep}=\frac{u}{1+\ep}$, where  $\ep>0$, then $|u_{\ep}|<1$. We define $F_{\ep}(x)=\frac{|\nabla u_{\ep}|^2}{1-u_{\ep}^2}- \lambda$. Suppose $F_{\ep}(x_0)=\max\limits_{M^n}  \frac{|\nabla u_{\ep}|^2}{1-u_{\ep}^2}$. There are two cases :

If $x_0 \in M^n\backslash \p M^n$,  then we get $\nabla F_{\e}(x_0)=0$.

If $x_0 \in  \p M^n$, we can choose an orthonormal basis $\{e_1,\cdots, e_n \}$ in the neighborhood $U$ of $x_0$ such that $e_n(x)=\vec{n}(x)$ for all $x\in \p M^n\cap U$ and $\nabla _{e_n}e_i(x)=0$ for all $x\in U$, $1\le i\le n$.

From the definition of $x_0$, we know 
\begin{equation}
\label{neumann , ge 0, 4.22}
\nabla _iF_{\ep}(x_0)=0, \, i=1,\cdots,n-1 \,\, \text{and} \, \nabla _{\vec{n}}F_{\e}(x_0)\ge 0.
\end{equation} 

Note $\nabla _{\vec{n}}F_{\e}= \nabla _{\vec{n}}|\nabla \sin^{-1}u_\epsilon|^2$, from Lemma \ref{lem boundary computation,dirichlet,1.22}, we get that  $\nabla _{\vec{n}} F_{\e}(x_0)\leq 0$. Now $\nabla _{\vec{n}}F_{\e}(x_0)=0$ by (\ref{neumann , ge 0, 4.22}). Therefore, $\nabla F_{\e}(x_0)=0$ for $x_0\in \p M^n $.

Then  we obtain 
\begin{equation}
\nabla F_{\ep}(x_0)=0. \label{1.3.25}
\end{equation}

\textbf{Step (2)}. Then from the choice of $x_0$, for any $v\in ST_{x_0}M^n$ and $\gamma_{x_0}(tv)= \exp_{x_0}(tv)$, we get
\begin{align}
\nabla ^2_{v, v}F_{\ep}(x_0)&= 2\lim_{t\rightarrow 0}\frac{F_{\ep}(\gamma_{x_0}(tv))- F_{\ep}(x_0)- t\nabla _vF_{\ep}(x_0)}{t^2}= 2\lim_{t\rightarrow 0}\frac{F_{\ep}(\gamma_{x_0}(tv))- F_{\ep}(x_0)}{t^2} \nonumber \\
&\leq 0. \label{2.3.25}
\end{align}

We suppose $\nabla u_{\ep}(x_0)\neq 0$ in the rest argument, otherwise $|\nabla u_{\ep}|\equiv 0$ on $ M^n $, and  the conclusion is trivial.

Note $u_\epsilon$ is an eigenfunction with $|u_\epsilon|< 1$. From Lemma \ref{lem Laplace of gradient of angles}, we get 
\begin{align}
\mathscr{L}_{u_\epsilon}F_\epsilon(x_0)\geq 0.  \nonumber 
\end{align}

From the above, (\ref{1.3.25}) and (\ref{2.3.25}), we get $F_{\ep}(x_0)\leq 0$, which implies $|\nabla u_{\ep}|^2(x_0)+\lambda u_{\ep}^2(x_0)\le \lambda$. By the definition of $x_0$, we know $|\nabla u_{\ep}|^2+\lambda u_{\ep}^2(x)\le \lambda$ for all $x\in M^n$.
  Let $\ep \to 0$, we get $|\nabla u|^2+\lambda u^2\le \lambda$.
\end{proof}

\section{The local splitting induced by eigenfunctions}\label{sec local splitting}

\begin{definition}\label{def Mp definition}
{For $u\in C^\infty(M^n)$ and $p\in M^n$, if $\nabla u(p)\neq 0$, let $M_{u, p}$ be the connected component of $M^n \backslash  \mathscr{C}(u)$ containing $p$, where $\mathscr{C}(u)=\{x\in M^n: |\nabla u|(x)=0 \}$. When the context is clear, we also use $M_p$ for simplicity.
}
\end{definition}

\begin{prop}\label{prop the isometric map,4.23}
For compact Riemannian manifold $(M^n, g)$ with $Rc\geq 0$. Assume $\Delta u= -\lambda u, \displaystyle \max_{x\in M^n}|u(x)|= 1$, where $\lambda> 0$. If $|\nabla \sin^{-1}u|^2(p)= \lambda$ holds at some point $p$ with $|u(p)|< 1$ , then  $\displaystyle N^{n-1}\vcentcolon= u^{-1}(u(p))\cap M_p$ is connected and $\partial N^{n- 1}= \emptyset$.  Furthermore, 
\begin{enumerate}
\item[(N)]. If $\partial M^n= \emptyset$,  then $M_p$ is isometric to $\displaystyle N^{n-1}\times \left(-\frac{\pi}{2\sqrt{\lambda}},  \frac{\pi}{2\sqrt{\lambda}} \right)$ and $\partial N^{n- 1}= \emptyset$. 
\item[(NB)]. If $\partial M^n\neq \emptyset$ is smooth,  convex and $\frac{\partial u}{\partial \vec{n}}\big|_{\partial M^n}= 0$,  then $M_p$ is isometric to $\displaystyle N^{n-1}\times \left(-\frac{\pi}{2\sqrt{\lambda}},  \frac{\pi}{2\sqrt{\lambda}} \right)$.
\item[(DB)]. If $\partial M^n\neq \emptyset$ is smooth,  mean convex and $u\big|_{\partial M^n}= 0$; then $\partial N^{n- 1}= \emptyset$. And
\begin{enumerate}
\item[(DB-a)] If $M_p\cap \partial M^n= \emptyset$,  then $M_p$ is isometric to $\displaystyle N^{n-1}\times \left(-\frac{\pi}{2\sqrt{\lambda}},  \frac{\pi}{2\sqrt{\lambda}} \right)$.  
\item[(DB-b)] If $M_p\cap \partial M^n\neq \emptyset$,  then $M_p$ is isometric to $\displaystyle N^{n-1}\times \left(-\frac{\pi}{2\sqrt{\lambda}},  0\right]$.  
\end{enumerate}
\end{enumerate}
\end{prop}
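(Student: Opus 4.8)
The plan is to exploit the rigidity case of the Bochner/gradient-estimate machinery from Section 2: at the point $p$ where $|\nabla\sin^{-1}u|^2(p)=\lambda$, the function $F=\frac{|\nabla u|^2}{1-u^2}-\lambda$ attains its maximum value $0$ on $M^n$ (by Theorem 2.4), so on the connected open set $M_p$ (where $|\nabla u|\neq 0$ and, as one first checks, $|u|<1$ automatically since $|u|=1$ forces $|\nabla u|=0$ by the estimate) we may apply $\mathscr{L}_u$. Since $\mathscr{L}_u(F)\geq 0$ by Lemma 2.3 and $F\leq 0=F(p)$ with $F(p)$ an interior maximum, the strong maximum principle for the (non-degenerate, since $|\nabla u|\neq 0$ on $M_p$) elliptic operator $\mathscr{L}_u$ forces $F\equiv 0$ on $M_p$. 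Tracing back through the proof of Lemma 2.3, equality in $F\equiv 0$ combined with equality in Bochner's formula $\Delta|\nabla u|^2\geq 2|\nabla^2u|^2-2\lambda|\nabla u|^2$ and in $|\nabla^2u|^2\geq u_{11}^2$ yields: $Rc(\nabla u,\nabla u)=0$, the Hessian $\nabla^2 u$ has only one nonzero entry $u_{11}$, and $u_{11}=\frac{(1-u^2)\nabla F\cdot e_1}{2|\nabla u|}-\frac{|\nabla u|^2 u}{1-u^2}=-\frac{|\nabla u|^2 u}{1-u^2}$. In particular $\nabla^2(\sin^{-1}u)\equiv 0$ on $M_p$: writing $\phi=\sin^{-1}u$ one computes $\nabla^2\phi=\frac{\nabla^2 u}{\sqrt{1-u^2}}+\frac{u\,\nabla u\otimes\nabla u}{(1-u^2)^{3/2}}$, and the two surviving terms cancel exactly by the formula for $u_{11}$; moreover $|\nabla\phi|^2=\frac{|\nabla u|^2}{1-u^2}=\lambda$ is constant on $M_p$.

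Next I would run the standard de Rham / splitting argument for a function with parallel gradient. Since $\nabla^2\phi\equiv 0$ on $M_p$, the vector field $X=\nabla\phi/\sqrt{\lambda}$ is a parallel unit field on $M_p$; its flow $\Phi_t$ consists of local isometries, and the distribution $X^\perp$ is parallel, hence integrable with totally geodesic leaves. Fix the leaf $N^{n-1}=\phi^{-1}(\phi(p))\cap M_p=u^{-1}(u(p))\cap M_p$ through $p$. Using the gradient curves of $\phi$ (equivalently the flow of $X$) one builds the map $\Theta:N^{n-1}\times I\to M_p$, $\Theta(q,t)=\Phi_t(q)$, which is a local isometry onto its image; because $|\nabla\phi|\equiv\sqrt\lambda$ and $\phi$ is proper on the closure (its level sets stay away from $\mathscr{C}(u)$ by maximality of $M_p$), the flow is defined exactly up to the values of $\phi$ realized on $M_p$. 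Since $u$ is bounded by $1$ and $\phi=\sin^{-1}u$ takes values in $(-\pi/2,\pi/2)$, and the speed of $\phi$ along a gradient curve is constant $\sqrt\lambda$, the parameter $t$ ranges over an interval of length at most $\pi/\sqrt\lambda$; one argues the flow cannot stop before hitting either a critical point of $u$ (boundary of $M_p$ in the interior sense) or $\partial M^n$, and in the interior/Neumann/closed cases the arclength reparametrization gives exactly $(-\pi/(2\sqrt\lambda),\pi/(2\sqrt\lambda))$. Connectedness of $N^{n-1}$ follows because $\Theta$ is a diffeomorphism and $M_p$ is connected, and $\partial N^{n-1}=\emptyset$ follows from the product structure together with the case analysis on whether $\partial M^n$ meets $M_p$.

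The case distinctions are then handled as follows. In case (N) and (NB) there is nothing more: $M_p$ is the full flow-box $N^{n-1}\times(-\pi/(2\sqrt\lambda),\pi/(2\sqrt\lambda))$, and in (NB) one checks that $\partial M^n\cap M_p$, if nonempty, would be a union of leaves $N^{n-1}\times\{t\}$ (since $\nabla u$ is tangent to $\partial M^n$ under the Neumann condition, forcing $\phi$ to be locally constant on the boundary, contradicting $|\nabla\phi|\neq 0$), so in fact $\partial M^n\cap M_p=\emptyset$. In the Dirichlet case (DB), first $\partial N^{n-1}=\emptyset$ because a boundary point of $N^{n-1}$ would lie on $\partial M^n$ where $u=0=u(p)$, but then $|\nabla u|^2(x)=\lambda-\lambda u^2(x)=\lambda\neq 0$ shows such boundary points of $M^n$ are regular points of $u$ lying in the interior of $M_p$'s leaf—one verifies they form an open-and-closed subset of $N^{n-1}$ of the wrong type unless empty. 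For (DB-a) the flow never reaches $\partial M^n$ and we get the open interval; for (DB-b) the flow reaches $\partial M^n$ exactly at the level $u=0$, i.e. $\phi=0$, giving the half-open interval $(-\pi/(2\sqrt\lambda),0]$, with $\{0\}\times N^{n-1}$ a connected component of $\partial M^n$.

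The main obstacle is making the flow-box argument globally rigorous on $M_p$ rather than just locally: one must show the gradient flow of $\phi$ does not develop singularities or fail to be injective inside $M_p$, that the leaves remain complete (as manifolds without boundary, or with the expected boundary behaviour in case DB-b), and that $M_p$ is \emph{exhausted} by the flow—i.e. that $\phi$ cannot approach a value strictly inside $(-\pi/2,\pi/2)$ without the gradient curve hitting $\mathscr{C}(u)$ or $\partial M^n$. This is where the precise identification of the endpoints $\pm\pi/(2\sqrt\lambda)$ (versus $0$ in DB-b) comes from, and it is the step requiring the most care; the vanishing-Hessian computation itself is essentially forced once the strong maximum principle gives $F\equiv 0$.
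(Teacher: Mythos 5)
Your overall strategy matches the paper's: propagate $F\equiv 0$ on $M_p$ via the strong maximum principle (valid since $|\nabla u|\neq 0$ there), deduce that $\Theta=\lambda^{-1/2}\sin^{-1}u$ has vanishing Hessian, and integrate the gradient flow of $\Theta$ to build the product map. A small cosmetic difference: the paper obtains $\nabla^2\Theta\equiv 0$ by noting that $F\equiv 0$ immediately gives $|\nabla\Theta|\equiv 1$ and $\Delta\Theta\equiv 0$, after which Bochner yields $|\nabla^2\Theta|^2\le 0$ in one line, rather than tracing equality cases back through Lemma~\ref{lem Laplace of gradient of angles} as you do; both routes are valid.

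However, your case (NB) contains a genuine error. You assert that the Neumann condition $\frac{\partial u}{\partial\vec n}\big|_{\partial M^n}=0$ ``forc[es] $\phi$ to be locally constant on the boundary'' and conclude $\partial M^n\cap M_p=\emptyset$ a priori. The Neumann condition only kills the normal derivative; $\nabla u$ remains tangent to $\partial M^n$ and nonzero on $M_p$, so $u$ certainly varies along the boundary and $\phi$ is not locally constant there. Gradient curves of $\Theta$ can therefore reach $\partial M^n$; what actually saves the flow-box argument, and is exactly case (b) of the (NB) analysis in the paper's Claim~\ref{claim proj map exists}, is that tangency of $\nabla\Theta$ to $\partial M^n$ lets the curve be continued \emph{inside} the boundary rather than exiting $M^n$. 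Without this step you cannot establish $T_x=\hat T_x=\frac{\pi}{2\sqrt\lambda}$, which is precisely where the open interval $\bigl(-\frac{\pi}{2\sqrt\lambda},\frac{\pi}{2\sqrt\lambda}\bigr)$ comes from; and the claim $\partial M^n\cap M_p=\emptyset$ in (NB) is a consequence of the isometry once it is built, not a hypothesis you may feed in. Your (DB) argument for $\partial N^{n-1}=\emptyset$ is also delicate when $u(p)=0$: there $u^{-1}(0)\cap M_p$ can coincide locally with $\partial M^n$ (since $\nabla u\perp\partial M^n$ under Dirichlet conditions), so boundary points of $M^n$ on that level set are interior points of the leaf, not boundary points of it, and the ``open-and-closed of the wrong type'' remark needs to be made precise in this configuration.
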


\pf
{\textbf{Step (1)}. First note that for any $x\in M_p$, we have $|\nabla u|(x)\neq 0$ and $|u|(x)<1$ by the definition of $M_p$ and Theorem \ref{thm LY estimate on mflds for neumann with boundary, 4.22}.

Let  $V=\{x\in M_p\,\,|\,\, |\nabla  \sin^{-1}u|(x)=\sqrt{\lambda}    \}$, which is nonempty since $p\in V$. It's evident  that $V$ is  a closed set in $M_p$. 

By Lemma \ref{lem Laplace of gradient of angles}, we have 
\begin{align}
\mathscr{L}_u(F)(x)\geq 0, \quad \quad \quad \forall x\in M_p; \nonumber 
\end{align}
where $F(x)= |\nabla \sin^{-1}u|^2(x)- \lambda$.

Given $x_1\in V $, note $F$ attains its maximum at $x_1$ with $F(x_1)=0$ by Theorem \ref{thm LY estimate on mflds for neumann with boundary, 4.22}. Note $\frac{\partial F}{\partial\vec{n}}\big|_{\partial M^n}\leq 0$ by Lemma \ref{lem boundary computation,dirichlet,1.22}, we apply the Strong Maximum Principle (see \cite[Lemma $3.4$ and Theorem $3.5$]{GT}) to get   
$F\big|_{B_{x_1}(r)\cap M_p}\equiv 0$. Therefore $(B_{x_1}(r)\cap M_p)\subset V$, and $V$ is open in $M_p$.

Because $M_p$ is connected , we conclude that  $|\nabla \sin^{-1}u|(x)=\sqrt{\lambda}$ for any $x\in M_p$.

Let $\Theta(x)= \frac{1}{\sqrt{\lambda}}\sin^{-1}u(x)$ for $x\in M_p$, then direct computation combining $|\nabla  \Theta|(x)=1, \Delta u= -\lambda u$ yields
\begin{align}
\Delta \Theta= 0, \quad \quad \quad \quad \forall x\in M_p. \nonumber 
\end{align}

From the Bochner formula and $Rc\ge 0$, we can infer that 
\begin{equation*}
\begin{split}
|\nabla ^2\Theta|^2&=\frac{1}{2}\Delta|\nabla \Theta|^2-\langle \nabla  \Delta \Theta, \nabla \Theta  \rangle- Rc(\nabla \Theta,\nabla \Theta) \le 0.
\end{split}
\end{equation*}

Hence $|\nabla ^2\Theta|\equiv 0$ and $|\nabla \Theta|\equiv 1$ in $M_p$. 

\textbf{Step (2)}.  Define $\mathcal{L}_t= \big\{\ x\in M_p: \Theta(x)= t\big\}$, from $|\nabla \Theta|\equiv 1$ and the Implicit Function Theorem, we know that $\mathcal{L}_t$ is $(n- 1)$-dim submanifold of $M^n$ if $\mathcal{L}_t\neq \emptyset$. 

For any $x\in M_p$, let $\gamma_x$ be the gradient curve of $\nabla \Theta$ with $\gamma_x(0)= x$ in $M_p$. Define 
\begin{align}
T_x\vcentcolon= \sup\{t\geq 0: \gamma_x(t)\in M_p\}, \quad \quad \hat{T}_x\vcentcolon= \sup\{t\geq 0: \gamma_x(-t)\in M_p\}, \quad \quad \forall x\in M_p.  \nonumber 
\end{align}

Now we prove the following claim:
\begin{claim}\label{claim proj map exists}
{For any $x\in M_p$, the set $\gamma_x\cap \mathcal{L}_0$ contains a unique point, which we denote by $\mathcal{P}(x)$.  
}
\end{claim}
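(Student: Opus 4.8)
The plan is to establish that every gradient curve of $\Theta$ reaches the level set $\mathcal{L}_0 = \Theta^{-1}(0) \cap M_p$ exactly once, using the fact (from Step (1)) that $|\nabla\Theta| \equiv 1$ and $|\nabla^2\Theta| \equiv 0$ on $M_p$. First I would record the elementary consequence of $|\nabla\Theta| \equiv 1$: along any integral curve $\gamma_x$ of $\nabla\Theta$ parametrized so that $\gamma_x(0) = x$, we have $\frac{d}{ds}\Theta(\gamma_x(s)) = |\nabla\Theta|^2(\gamma_x(s)) = 1$, so $\Theta(\gamma_x(s)) = \Theta(x) + s$. Moreover, since $|\nabla^2\Theta| \equiv 0$, the curve $\gamma_x$ is a unit-speed geodesic in $M^n$ (indeed $\nabla_{\dot\gamma_x}\dot\gamma_x = \nabla_{\nabla\Theta}\nabla\Theta = \frac12\nabla|\nabla\Theta|^2 = 0$). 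This already shows \emph{uniqueness}: if $\gamma_x(s_1), \gamma_x(s_2) \in \mathcal{L}_0$ then $\Theta(x)+s_1 = 0 = \Theta(x)+s_2$, so $s_1 = s_2$.

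For \emph{existence}, I would argue that the maximal interval of definition of $\gamma_x$ inside $M_p$ contains the value $s = -\Theta(x)$. Set $s_0 = -\Theta(x)$; the only way $\gamma_x$ could fail to reach $\mathcal{L}_0$ is if the integral curve exits $M_p$ before parameter $s_0$ is attained. Since $M^n$ is compact, the geodesic $\gamma_x$ extends for all $s \in \mathbb{R}$, so the obstruction is not incompleteness but rather $\gamma_x$ hitting the boundary of $M_p$. The boundary of $M_p$ (relative to $M^n$) consists of points of $\mathscr{C}(u)$, i.e.\ critical points of $u$, together with points of $\partial M^n$ in the Neumann case. But $\Theta$ is constant along no arc of $\gamma_x$, and in fact $\Theta(\gamma_x(s)) = \Theta(x) + s$ ranges over an open interval; since (by the proof of Theorem~\ref{thm LY estimate on mflds for neumann with boundary, 4.22} and the sharp estimate) $|u| < 1$ strictly on $M_p$, and $\Theta = \frac{1}{\sqrt\lambda}\sin^{-1}u$ takes values in $(-\frac{\pi}{2\sqrt\lambda}, \frac{\pi}{2\sqrt\lambda})$, we need $\Theta(x) + s_0 = 0 \in (-\frac{\pi}{2\sqrt\lambda}, \frac{\pi}{2\sqrt\lambda})$, which holds. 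The key point to verify is that the level value $0$ is \emph{interior} to the range of $\Theta$ on the connected component $M_p$, so that $\gamma_x$ cannot run off to the $\Theta$-extremes before reaching $\mathcal{L}_0$; equivalently, one shows that as $s \to T_x$ or $s \to -\hat T_x$ the curve either stays in $M_p$ (contradicting maximality) or limits onto $\mathscr{C}(u)$ where $\nabla u = 0$, which forces $\cos(\sqrt\lambda\,\Theta) \cdot \sqrt\lambda = |\nabla u|/\sqrt{1-u^2}\cdot\ldots \to 0$, hence $\Theta \to \pm\frac{\pi}{2\sqrt\lambda}$ at such limit points. Thus the full $\Theta$-range $(-\frac{\pi}{2\sqrt\lambda}, \frac{\pi}{2\sqrt\lambda})$ is swept, and in particular the value $0$ is attained along $\gamma_x$.

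Finally I would define $\mathcal{P}(x)$ to be this unique intersection point $\gamma_x(-\Theta(x)) \in \mathcal{L}_0$, and note in passing (for later use, though not strictly needed for the claim) that $\mathcal{P}$ is smooth by smooth dependence of geodesics on initial data together with smoothness of $\Theta$. The main obstacle I anticipate is the boundary analysis: carefully controlling the behavior of $\gamma_x$ as it approaches $\partial M_p$, distinguishing the case where $\partial M_p$ meets $\mathscr{C}(u)$ (where $|\nabla u| \to 0$ so necessarily $u^2 \to 1$ by the sharp estimate $|\nabla u|^2 + \lambda u^2 \le \lambda$ forces nothing directly — rather one uses $|\nabla\sin^{-1}u| = \sqrt\lambda$ on $M_p$ to get $|\nabla u|^2 = \lambda(1-u^2)$, so $|\nabla u| \to 0 \iff u^2 \to 1 \iff \Theta \to \pm\frac{\pi}{2\sqrt\lambda}$) from the Dirichlet case where $\partial M_p$ may meet $\partial M^n$ at $u = 0$, i.e.\ $\Theta = 0$ — but that is exactly where $\mathcal{L}_0$ lives, so the curve reaching $\partial M^n$ is consistent with, not an obstruction to, hitting $\mathcal{L}_0$. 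Handling these cases uniformly, and ruling out the pathology that $\gamma_x$ could leave $M_p$ through a critical point while $\Theta$ is still away from $\pm\frac{\pi}{2\sqrt\lambda}$, is the crux.
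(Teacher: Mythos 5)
Your approach mirrors the paper's: exploit $|\nabla\Theta|\equiv 1$ to obtain $\Theta(\gamma_x(s))=\Theta(x)+s$ (giving uniqueness immediately), and argue that the gradient curve must sweep through the level $\Theta=0$ because critical points of $u$ can only occur where $\Theta\to\pm\frac{\pi}{2\sqrt\lambda}$ (via $|\nabla u|^2=\lambda(1-u^2)=\lambda\cos^2(\sqrt\lambda\,\Theta)$ on $M_p$). The paper proves $T_x\ge -\Theta(x)$ by contradiction, showing that if $T_x<-\Theta(x)$ then $|\nabla u|$ remains bounded below along $\gamma_x|_{[0,T_x)}$, so $\gamma_x(T_x)\in M_p$ and the curve extends a bit further, contradicting maximality of $T_x$. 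Your argument is the same in spirit.

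However, there is a genuine gap in your handling of the boundary cases. First, the statement ``Since $M^n$ is compact, the geodesic $\gamma_x$ extends for all $s\in\mathbb{R}$'' is false when $\partial M^n\ne\emptyset$: geodesic completeness (Hopf--Rinow) requires no boundary, and a geodesic in a manifold with boundary can terminate upon reaching $\partial M^n$. The correct object to extend is the integral curve of the smooth vector field $\nabla\Theta$ on $M_p$ (which happens to be a geodesic), and the extension must be argued directly. Second, you mischaracterize $\partial M_p$ in the Neumann case: non-critical points of $\partial M^n$ lie \emph{in} $M_p$ (since $M_p$ is a connected component of $M^n\setminus\mathscr{C}(u)$), not on its topological boundary. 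What actually needs justifying is that when $\gamma_x$ reaches such a point, the flow continues. The paper does exactly this in case (NB)(b): it observes that the Neumann condition forces $\gamma_x'(T_x)=\nabla\Theta(\gamma_x(T_x))\in T_{\gamma_x(T_x)}\partial M^n$, and since $\nabla u$ is everywhere tangent to $\partial M^n$ the gradient curve extends within $\partial M^n$ past time $T_x$. Your proposal acknowledges that ``handling these cases uniformly \ldots is the crux'' but does not supply this tangency-and-extension argument, so the Neumann case is not actually resolved. (The Dirichlet case you treat correctly: $u|_{\partial M^n}=0$ forces $\Theta=0$ at any boundary hit, which is consistent with landing on $\mathcal{L}_0$ rather than being an obstruction.)
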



\pf[Proof of Claim \ref{claim proj map exists}]
{If $\Theta(x)= 0$,  then $\mathcal{P}(x)= x$.


Now we consider the case $\Theta(x)< 0$. We firstly show $T_x\geq -\Theta(x)$. By contradiction, assume $T_x< -\Theta(x)$.  Then by $|\nabla \Theta|\leq 1$, we have
\begin{align}
-\frac{\pi}{2\sqrt{\lambda}}< \Theta(x)\leq \Theta(\gamma_x(t))\leq \Theta(x)+ t\leq \Theta(x)+ T_x< 0, \quad \quad \forall t\in [0, T_x).\nonumber 
\end{align}
Using $|\nabla \Theta|\equiv 1$ in $M_p$ and $u(z)= \sin(\sqrt{\lambda}\Theta(z))$ for any $z\in M_p$,  we get
\begin{align}
|\nabla u|(\gamma_x(t))\geq \sqrt{\lambda}\cdot \sqrt{1- \sin^2(\sqrt{\lambda}\Theta(x))}> 0, \quad \forall t\in [0, T_x).\label{Du not vanish}
\end{align}

From (\ref{Du not vanish}), we get 
\begin{align}
\lim_{t\rightarrow T_x}|\nabla u|(\gamma_x(t))\geq \sqrt{\lambda}\cdot \sqrt{1- \sin^2(\sqrt{\lambda}\Theta(x))}> 0, \label{Du gamma Tx is not vanishing}
\end{align}
which implies $\gamma_x(T_x)$ is well-defined and $\gamma_x(T_x)\in M_p$. 

Furthermore, 
\begin{align}
u(\gamma_x(t))= \sin(\sqrt{\lambda}\Theta(\gamma_x(t)))\leq \sin(\sqrt{\lambda}[\Theta(x)+ T_x])< 0, \quad \quad \forall t\in [0, T_x).\label{u is not zero} 
\end{align}

For case (DB), from (\ref{u is not zero}), we know $\gamma_x(T_x)\notin \partial M^n $. Then  there is $\delta> 0$ such that $\gamma_x(T_x+ \delta)\in M_p$.

For case (NB), from (\ref{Du gamma Tx is not vanishing}), we know $|\nabla u|\left(\gamma_x(T_x)\right)>0$. 
\begin{enumerate}
\item[(a)]. If  $\gamma_x(T_x)\notin \partial M^n $, there is $\delta> 0$ such that $\gamma_x(T_x+ \delta)\in M_p$. 
\item[(b)]. If $\gamma_x(T_x)\in \partial M^n $, from $\gamma_x'(T_x)=\nabla \Theta \left(\gamma_x(T_x)\right)=\frac{\nabla u}{| \nabla u|}\left(\gamma_x(T_x)\right)\neq 0$ and $\frac{\p u}{\p \vec{n}}\left(\gamma_x(T_x)\right)=0$; we obtain $\gamma_x'(T_x)\in T_{\gamma_x(T_x)}\p M^n$. 

Combining $\p M^n$ is smooth and $(\nabla u\cdot \vec{n})\big|_{\partial M^n}= 0$, we can extend $\gamma_x$ (which is the gradient curve of $u$) in $\partial M^n$. Hence there is $\delta> 0$ such that $\gamma_x(T_x+ \delta)\in M_p$. 
\end{enumerate}

By the above argument, there is $\delta> 0$ such that $\gamma_x(T_x+ \delta)\in M_p$. It contradicts the definition of $T_x$. Therefore $T_x\geq -\Theta(x)$. 

Next we prove that $\gamma_x(-\Theta(x))\in M_p$. Like (\ref{Du not vanish}), we have 
\begin{align}
|\nabla u|(\gamma_x(t))\geq \sqrt{\lambda}\cdot \sqrt{1- \sin^2(\sqrt{\lambda}\Theta(x))}> 0, \quad \quad \forall t\in [0, -\Theta(x)).\label{Du not vanish on bigger interval}
\end{align}
This implies $\gamma_x(-\Theta(x))\in M_p$ is well-defined. 

Now $\mathcal{P}(x)= \gamma_x(-\Theta(x))$ is well defined. 

If $\Theta(x)> 0$, consider the gradient curve of $-\nabla \Theta$ as above, we can get that $\mathcal{P}(x)$ is also well-defined. By all the above, we know that $\mathcal{P}(x)$ is well-defined for $x\in M_p$. 
}
\qed

\textbf{Step (3)}.  Now for cases (N), (NB) and (DB-a),  we will show that for each $x\in \mathcal{L}_0$,  there is 
\begin{align}
T_x= \hat{T}_x= \frac{\pi}{2\sqrt{\lambda}}.  \label{end pts of interval}
\end{align}

By contradiction,  if $T_x< \frac{\pi}{2\sqrt{\lambda}}$,  then $\Theta(\gamma_x(T_x))< \frac{\pi}{2\sqrt{\lambda}}$.

  We get $|\nabla u|(\gamma_x(T_x))= \sqrt{\lambda}\cdot \sqrt{1- u^2(\gamma_x(T_x))}> 0$.  Hence there is $\delta> 0$ such that $\gamma_x(T_x+ \delta)\in M_p$,  which contradicts the definition of $T_x$.

  Similar argument applies on $\hat{T}_x$. Therefore (\ref{end pts of interval}) is proved.

Similarly,  for case (DB-b) we can show
\begin{align}
&T_x\cdot \hat{T}_x= 0,  \quad \text{and}\quad  T_x+ \hat{T}_x= \frac{\pi}{2\sqrt{\lambda}},  \quad \quad \quad \forall x\in \mathcal{L}_0\cap \partial M^n .\label{end pts of interval-DB-b}
\end{align}

For cases (N),  (NB) and (DB-a), we define the map 
\begin{align}
\psi(x)= (\Theta(x), \mathcal{P}(x)): M_p\rightarrow (-\frac{\pi}{2\sqrt{\lambda}}, \frac{\pi}{2\sqrt{\lambda}})\times \mathcal{L}_0. \label{def of psi for three cases} 
\end{align}
Because $x= \gamma_{\mathcal{P}(x)}(\Theta(x))$ and (\ref{end pts of interval}), we get that $\psi$ is a bijective map. Hence $\psi$ is a diffeomorphism between $M_p$ and $(-\frac{\pi}{2\sqrt{\lambda}}, \frac{\pi}{2\sqrt{\lambda}})\times \mathcal{L}_0$. 

For case (DB-b), we get that the map $\psi(x)= (\Theta(x), \mathcal{P}(x))$ is a diffeomorphism between $M_p$ and $(-\frac{\pi}{2\sqrt{\lambda}}, 0]\times \mathcal{L}_0$ (or $[0, \frac{\pi}{2\sqrt{\lambda}})\times \mathcal{L}_0$).

\textbf{Step (4)}. Assume $\xi_1'(0)= v_1\in T_xM^n$, now using $|\nabla^2\Theta|\equiv 0$ and the definition of $\mathcal{P}$, we get
\begin{align}
\langle\psi_*v_1, \psi_*v_1\rangle&= \frac{d}{dt}\psi(\xi_1)\cdot \frac{d}{dt}\psi(\xi_1)= \langle \frac{d}{dt}(\Theta(\xi_1(t)), \mathcal{P}(\xi_1(t))), \frac{d}{dt}(\Theta(\xi_1(t)), \mathcal{P}(\xi_1(t)))\rangle \nonumber \\
&= \langle \frac{d}{dt}\Theta(\xi_1(t)), \frac{d}{dt}\Theta(\xi_1(t))\rangle+ \langle \frac{d}{dt}\mathcal{P}(\xi_1(t)), \frac{d}{dt}\mathcal{P}(\xi_1(t))\rangle \nonumber \\
&= \langle \nabla\Theta\cdot v_1, \nabla\Theta\cdot v_1\rangle+ \langle \nabla\mathcal{P}\cdot v_1, \nabla\mathcal{P}\cdot v_1\rangle= \langle v_1, v_1\rangle. \nonumber 
\end{align}
Hence $\psi$ is a local isometry, the conclusion $\psi$ is an  isometry follows from all the above.

Therefore $N^{n-1}= u^{-1}(u(p))\cap M_p$ is isometric to $\mathcal{L}_0$. Because $M_p$ is connected, we get $N^{n- 1}$ is connected, by the above isometry between $M_p$ and the product of $N^{n- 1}$ with an interval. 

Also we obtain $\partial N^{n-1}= \emptyset$ by the above isometry and $\partial M^n$ is smooth.
}
\qed

\section{The global splitting}\label{sec splitting extension}

\begin{theorem}\label{thm splitting for Neumann with boundary}
{For compact Riemannian manifold $(M^n, g)$ with $Rc\geq 0$ and smooth (possibly empty) boundary . Assume $\Delta u= -\lambda u, \displaystyle \max_{x\in M^n}|u(x)|= 1$, where $\lambda> 0$. If $|\nabla \sin^{-1}u|^2(p)= \lambda$ holds at some point $p$, where $|u(p)|< 1$. Define $\mathcal{N}(u)= \#(M^n- u^{-1}(0))$ as the number of connected components of $M^n- u^{-1}(0)$ (the number of  nodal domains of $u$); then:
\begin{enumerate}
\item[(1)]. If $\partial M^n= \emptyset$, then $M^n$ is isometric to $\mathbb{S}^1(\frac{\mathcal{N}(u)}{2\sqrt{\lambda}})\times N^{n- 1}$, where $\mathbb{S}^1(\frac{\mathcal{N}(u)}{2\sqrt{\lambda}})$ is a round circle with radius $\frac{\mathcal{N}(u)}{2\sqrt{\lambda}}$ and $\mathcal{N}(u)= 2k$ for some $k\in \mathbb{Z}^+$;
\item[(2)]. If $\partial M^n\neq \emptyset$ is convex and $\frac{\partial u}{\partial \vec{n}}\big|_{\partial M^n}= 0$, then $M^n$ is isometric to $[0, \frac{\mathcal{N}(u)- 1}{\sqrt{\lambda}}\cdot \pi]\times N^{n- 1}$ and $\mathcal{N}(u)\geq 2$;
\item[(3)]. If $\partial M^n\neq \emptyset$ is mean convex and $u\big|_{\partial M^n}= 0$; then $M^n$ is isometric to $[0, \frac{\mathcal{N}(u)}{\sqrt{\lambda}}\cdot \pi]\times N^{n- 1}$ and $\mathcal{N}(u)\geq 1$;
\end{enumerate}
where $N^{n- 1}$ is a compact Riemannian manifold with $Rc\geq 0$ and $\partial N^{n- 1}= \emptyset$. }
\end{theorem}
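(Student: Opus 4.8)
The plan is to upgrade the local splitting of Proposition~\ref{prop the isometric map,4.23} to a global one by proving that the everywhere--smooth function $G:=|\nabla u|^2+\lambda u^2-\lambda$ vanishes identically on $M^n$. By Theorem~\ref{thm LY estimate on mflds for neumann with boundary, 4.22} we already know $G\le 0$, and by Proposition~\ref{prop the isometric map,4.23} we know $G\equiv 0$ on $M_p$. On $\{|u|<1\}$ one has $G=(1-u^2)F$ with $F=|\nabla\sin^{-1}u|^2-\lambda$, so Lemma~\ref{lem Laplace of gradient of angles} shows that $F$ is a subsolution of the locally uniformly elliptic operator $\mathscr L_u$ of Notation~\ref{notation ellip operator} (which has nonpositive zeroth--order coefficient), Lemma~\ref{lem boundary computation,dirichlet,1.22} controls its boundary normal derivative, and since $1-u^2>0$ there, $G$ has the same sign and zero set as $F$ on $M^n\setminus\mathscr C(u)$. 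Hence the interior and boundary--point strong maximum principles (the ones already cited in Proposition~\ref{prop the isometric map,4.23}) imply that the closed set $Z:=\{G=0\}$ either contains or is disjoint from each connected component of $M^n\setminus\mathscr C(u)$. Since $\mathscr C(u)$ has empty interior (unique continuation for $\Delta u+\lambda u=0$), it remains only to show that $Z$ contains a full neighbourhood of every $q\in Z\cap\mathscr C(u)$ which lies in the closure of a component $C\subseteq Z$ and is simultaneously a limit of points with $G<0$; granting this, $Z$ is open and closed, so $Z=M^n$.

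Fix such a $q$. From $G(q)=0$ and $\nabla u(q)=0$ we get $u(q)=\pm1$, say $u(q)=1$, so $q$ is a maximum of $u$ with $\operatorname{tr}\nabla^2u(q)=\Delta u(q)=-\lambda$ and $\nabla^2u(q)\le 0$. Running a gradient curve of $\nabla\Theta$ inside $C$ toward $q$ — a unit--speed geodesic because $\nabla^2\Theta\equiv0$ on $C$ (the computation in Proposition~\ref{prop the isometric map,4.23} applies verbatim to $C$) — along which $u=\cos(\sqrt\lambda\cdot(\text{arclength to }q))$, gives $\nabla^2u(q)(\nu,\nu)=-\lambda$ for the limiting unit direction $\nu$; with the trace and sign constraints this forces $\nabla^2u(q)=-\lambda\,\nu^\flat\otimes\nu^\flat$, so the Hessian of $u$ at $q$ has rank one. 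Consequently, in Fermi coordinates $(s,y)$ along $\exp_q(s\nu)$ (with $s<0$ the $C$--side), $u=1-\tfrac\lambda2 s^2+O((|s|+|y|)^3)$, the slab $\{0<|s|<\delta,\ |y|<\delta\}$ contains no critical point and no point with $|u|=1$, and $G\equiv 0$ on the closed side $\{s\le0,\ |s|<\delta,\ |y|<\delta\}$. By the ordinary strong maximum principle on the connected region $\{0<s<\delta,\ |y|<\delta\}$, either $G\equiv 0$ there — which is what we want — or $G<0$ throughout it.

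The heart of the proof, and the step I expect to be the main obstacle, is to exclude the second alternative. The difficulty is twofold: first, $G$ is \emph{infinitely flat} along $\{s=0\}$ near $q$ (all its derivatives there vanish, since $G\equiv0$ on the whole side $\{s\le0\}$), so in particular $\nabla G(q)=0$; second, the elliptic operator for $G$ degenerates at $\{u=1\}$: writing it as $\Delta G+b\cdot\nabla G\ge0$ with $b=\tfrac{2\lambda u}{|\nabla u|^2}\nabla u$, the Taylor data above give $b=-\tfrac2s\,\partial_s+O(1)$ near $q$, so neither an interior ball with the standard exponential barrier nor a naive modification keeps the right sign of the operator near $q$. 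That the statement is nevertheless true is visible in the model one--dimensional inequality $G''-\tfrac2sG'\ge0$ on $(0,\delta)$: it says $(G's^{-2})'\ge0$, so $G's^{-2}$ is nondecreasing, and infinite flatness forces $G's^{-2}\to0$ as $s\to0^+$, whence $G'\ge0$ on $(0,\delta)$, and together with $G(0)=0$, $G\le0$ this gives $G\equiv0$ there. The make-up function is the device that makes this rigorous in $n$ dimensions in the spirit of Hopf's lemma: using the Taylor expansions of the metric and of $u$ at $q$ (and the rank-one form of $\nabla^2u(q)$) one builds a comparison function $w$ with a matching singular/flat profile in the normal variable $s$ such that $\Delta w+b\cdot\nabla w$ has a favourable sign on the slab and the comparison of $G$ against $\varepsilon w$ forces $G\ge0$ near $q$; the nontrivial point is choosing exactly which correction terms in $(s,y)$ are needed so that the degenerate operator keeps the right sign uniformly on a full neighbourhood (and, when $q\in\partial M^n$, combining this with Lemma~\ref{lem boundary computation,dirichlet,1.22}). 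This yields $G\equiv0$ near $q$, hence $Z=M^n$ and $|\nabla u|^2+\lambda u^2\equiv\lambda$ on $M^n$.

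Finally I would globalize the splitting. With $G\equiv0$, on $\{|u|<1\}$ the function $\Theta=\tfrac1{\sqrt\lambda}\arcsin u$ has $|\nabla\Theta|\equiv1$ and $\Delta\Theta=0$, so $\nabla^2\Theta\equiv0$ by Bochner and $Rc\ge0$; moreover near $\{u=1\}$ the smooth function $\rho:=\tfrac1\lambda(\arccos u)^2=\tfrac2\lambda(1-u)+O((1-u)^2)$ satisfies $|\nabla\rho|^2=4\rho$ with $\nabla^2\rho$ of rank one, forcing $\{u=\pm1\}$ to be a disjoint union of smooth totally geodesic hypersurfaces, each isometric to a fixed compact $N^{n-1}$ with $Rc\ge0$ and $\partial N^{n-1}=\emptyset$ (propagated by the parallel field $\nabla\Theta$), and making the metric a Riemannian product $ds^2+g_{N}$ in Fermi coordinates on both sides of each such sheet. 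Developing along the complete parallel field $\nabla\Theta$ — equivalently, using the signed distance to a fixed sheet $\{u=1\}$ as the interval/circle coordinate and nearest--point projection onto that sheet as the $N^{n-1}$--coordinate — produces a global isometry onto $\mathbb S^1(\tfrac{\mathcal N(u)}{2\sqrt\lambda})\times N^{n-1}$ in case (1), onto $[0,\tfrac{(\mathcal N(u)-1)\pi}{\sqrt\lambda}]\times N^{n-1}$ in case (2), and onto $[0,\tfrac{\mathcal N(u)\pi}{\sqrt\lambda}]\times N^{n-1}$ in case (3). The length of the interval (or circumference of the circle) and the size/parity of $\mathcal N(u)$ are read off by counting the consecutive $\{u=\pm1\}$ sheets: $u$ runs $\cos$-like between any two neighbouring ones and has exactly one zero between them, a convex Neumann boundary coincides with such a $\{u=\pm1\}$ sheet, a mean convex Dirichlet boundary coincides with a nodal sheet, and compactness and connectedness of $M^n$ close off the developing map into a circle in the boundaryless case.
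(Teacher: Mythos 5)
Your architecture tracks the paper's proof closely: establish the local splitting via Proposition~\ref{prop the isometric map,4.23}, reduce the global statement to propagating the equality $|\nabla\sin^{-1}u|^2\equiv\lambda$ through the critical set $\{|u|=1\}$, recognize that the operator degenerates there so the off-the-shelf strong maximum principle fails, use the Taylor expansion of $u$ and the metric in Fermi coordinates off a level sheet $N^{n-1}\subset\{|u|=1\}$ to identify the leading $\tfrac{2}{x_1}$ drift and $-\tfrac{2}{x_1^2}$ zeroth-order singularity, and run a Hopf-type barrier comparison against $F$ (or your $G=(1-u^2)F$). Your $1$-D model $G''-\tfrac{2}{s}G'\ge0\Rightarrow (G's^{-2})'\ge0$ is the correct heuristic, and your derivation that the drift near $q$ is $-\tfrac{2}{s}\partial_s+O(1)$ is consistent with the paper's estimates (\ref{est of Psi in r})--(\ref{zero order est}).

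The genuine gap is exactly where you flag it: you do not construct the make-up function, and you speculate that ``correction terms in $(s,y)$'' may be needed, leaving the central step as an acknowledged obstacle. The paper's resolution is simpler than you anticipate. Working with $F=\tfrac{|\nabla u|^2}{1-u^2}-\lambda$ and the operator $\mathscr L_u$ of Notation~\ref{notation ellip operator}, the barrier is the purely normal quadratic
\begin{align}
v(x)=x_1^2-x_1, \nonumber
\end{align}
with no tangential correction at all. The point is an exact cancellation of the singular terms: $v_1\cdot\tfrac{2}{x_1}=\tfrac{2(2x_1-1)}{x_1}=4-\tfrac{2}{x_1}$, while $-\tfrac{2}{x_1^2}v=-2+\tfrac{2}{x_1}$, so the two $\tfrac{2}{x_1}$ contributions cancel and
\begin{align}
\mathscr L_u(v)=v_{11}+\Psi\cdot\nabla v-\frac{2|\nabla u|^2}{(1-u^2)^2}v=2+\big(4-\tfrac{2}{x_1}\big)+\big(-2+\tfrac{2}{x_1}\big)+O(x_1)=4+O(x_1)>0 \nonumber
\end{align}
on $U_{r_0}^-(N^{n-1})$ for $r_0$ small. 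Since $v\ge0$ on the slab, $v|_{x_1=0}=0$, and $\partial_{x_1}v|_{x_1=0}=-1<0$, comparing $F$ with $F+\epsilon v$ (choosing $\epsilon$ so that $F+\epsilon v\le0$ on $\{x_1=-r_0\}$ from strictness of $F<0$ inside) and applying the ordinary maximum principle to $\mathscr L_u$ yields $\partial_{x_1}F(q)\ge\epsilon>0$, contradicting $\partial_{x_1}F(q)=0$ from (\ref{Taylor of u in nbhd of N}). So the equality set crosses the critical sheet. Also note the paper needs only first-order flatness $\partial_{\vec n}F(q)=0$, not the infinite flatness your $1$-D model invokes, and it concludes by inducting on $\mathcal N(u)$ rather than via the open-closed/unique-continuation route you sketch (which would also work, but imports an unnecessary ingredient). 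If you supply the explicit $v$ and the cancellation computation, your proof would align with the paper's.
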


\pf
{\textbf{Step (1)}.  Define $N^{n- 1}= u^{-1}(-1)\cap \overline{M_p}$, which is an $(n-1)$-dim hypersurface in $M^n$ by Proposition \ref{prop the isometric map,4.23} (if $u^{-1}(-1)\cap \overline{M_p}= \emptyset$,  we define $N^{n- 1}= u^{-1}(1)\cap \overline{M_p}$, similar argument applies);  and $U_\epsilon(N^{n- 1})= \{x\in M^n: d(x, N^{n- 1})< \epsilon\}$. Then we have a coordinate chart $\{x_1, x'\}$ for $U_{r_0}(N^{n- 1})$ locally,  where $x'=(x_2,\cdots,x_n)$ is a coordinate chart for $N^{n- 1}$ and 
\begin{align}
x_1\big|_{M_p\cap U_{r_0}(N^{n- 1})}= d(x, N^{n- 1}), \quad \quad \quad x_1\big|_{U_{r_0}(N^{n- 1})- M_p}= -d(x, N^{n- 1}), \nonumber 
\end{align}
where $r_0> 0$ is to be determined later. We define $U_{r_0}^{-}(N^{n- 1})\vcentcolon= U_{r_0}(N^{n- 1})\cap \{x\in M^n: x_1< 0\}$.

Consider the metric $g$ in $U_{r_0}(N^{n- 1})$,  then we can write  $\displaystyle g=dx_1^2+ \sum_{2\leq i, j\leq n}f_{ij}(x_1,x')dx_idx_j$  locally.  By Proposition \ref{prop the isometric map,4.23},  there is $a> 0$,  such that the coefficients $f_{ij}$  satisfy:
$$
f_{ij}(x_1,x')=\begin{cases}
&f_{ij}(0,x'),\,\, 0\le x_1\le a,\\
&f_{ij}(x_1,x'),\,\, -a\le x_1\le 0.
\end{cases}
$$  

For any $q\in N^{n-1}$,  without loss of generality we can assume  $q=(0,0,\cdots,0)$.  Now we get 
\begin{align}
f_{ij}(0,x')=\delta_{ij}+O(|x'|).  \nonumber 
\end{align}

Denote $G=\mathrm{det}(f_{ij})_{2\le i,j\le n}$,  we get 
$$(\log \sqrt{G})_{x_1}=\frac{1}{2}(\log G)_{x_1}=O(x_1). $$ 

We use $u_i, u_{ij}$ to denote $\frac{\partial u}{\partial x_i},  \frac{\partial^2 u}{\partial x_i\partial x_j}$ in the rest argument,  similar notation applies for other functions.  

Note $u_{11}(q)= \lambda$ because $u\big|_{M_p}(x_1, x')= \sin(\sqrt{\lambda}x_1- \frac{\pi}{2})$ from Proposition \ref{prop the isometric map,4.23}.  Also we have 
\begin{align}
&u_{\alpha}(q)= 0, \quad \quad \text{if}  \quad \quad \alpha= (\alpha_1, \cdots, \alpha_k)\neq (1, \cdots, 1), \nonumber \\
&u_{\beta}(q)= 0, \quad \quad \text{if}  \quad \quad \beta= (1, \cdots, 1) \ \text{and there are odds number of $1$}, \nonumber 
\end{align}

Note $N^{n- 1}$ is compact, there is a finite coordinate chart covering of $N^{n- 1}$.  Using the Taylor's expansion of $u(x_1,x')$ at $q$ about $x_1$,  where $q$ can be freely chosen from $N^{n-1}$; we get 
\begin{align}
u(x)= -1+ \frac{1}{2}\lambda x_1^2+ O(|x_1|^4), \quad \quad \quad \forall x\in U_{r_0}(N^{n- 1}). \label{Taylor of u in nbhd of N}
\end{align}

By (\ref{Taylor of u in nbhd of N}), we can choose $r_0> 0$ such that
\begin{align}
|\nabla u \big|_{\overline{U_{r_0}(N^{n- 1})}- N^{n- 1}} |>0,  \quad \quad \text{and} \quad \quad |u\big|_{\overline{U_{r_0}(N^{n- 1})}- N^{n- 1}}|< 1. \nonumber 
\end{align}

\textbf{Step (2)}.  Now we define $F: \overline{U_{r_0}^-(N^{n- 1})}\rightarrow \mathbb{R}$ as follows:
\begin{equation}\nonumber 
		F(x)\vcentcolon= \left\{
		\begin{array}{rl}
			&\frac{|\nabla u|^2}{1- u^2}(x)- \lambda \ ,  \quad \quad \quad \quad\quad  \quad\quad \quad x\in \overline{U_{r_0}^-(N^{n- 1})}- N^{n- 1}, \\
			&0\ ,  \quad \quad \quad \quad \quad\quad \quad x\in N^{n- 1}. 
		\end{array} \right.
	\end{equation}
From the Taylor's expansion of $u$ at any point of $N^{n- 1}$,  we get that $F$ is continuous on $\overline{U_{r_0}^-(N^{n- 1})}$.  

Then from (\ref{Taylor of u in nbhd of N}), we have
\begin{align}
\frac{-2|\nabla u|^2}{(1- u^2)^2}&= -\frac{2}{x_1^2}+ O(1). \label{zero order est}
\end{align}

Direct computation yields 
\begin{align}
\Psi(x)&= \frac{\nabla u}{1- u^2}\cdot \{-\frac{1}{2}\frac{(1- u^2)^2}{|\nabla u|^4} \nabla u\cdot \nabla \Big( \frac{|\nabla u|^2}{1- u^2}(x)\Big)- 2u\} \nonumber \\
&= \Big(\frac{2}{ x_1}+ O(x_1)\Big)\cdot \partial x_1. \label{est of Psi in r}
\end{align}

 Consider $v(x)= x_1^2- x_1$, then $v$ satisfies 
\begin{align}
v\big|_{U_{r_0}^-(N^{n- 1})}\geq 0, \quad \quad v\big|_{x_1= 0}= 0, \quad \quad \frac{\partial}{\partial x_1}v\big|_{x_1= 0}= -1< 0. \label{C0 assumption of v} 
\end{align}

Furthermore, using (\ref{est of Psi in r}) and (\ref{zero order est}), we have 
\begin{align}
\mathscr{L}_u(v)&=  \Delta v(x)+ \nabla v(x)\cdot \Psi(x)- \frac{2|\nabla u|^2}{(1- u^2)^2}v(x)\nonumber \\
&= v_{11}+(\log \sqrt{G})_{x_1}v_{1}+\sum_{2\le i,j\le n} f^{ij}v_{ij}+\sum_{2\le i,j\le n} \{\frac{\p f^{ij}}{\p x_i}+ (\log \sqrt{G})_{x_i}f^{ij}  \}v_{j} \nonumber \\
&+ v_{1} (\frac{2}{x_1}+O(x_1) )-\frac{2}{x_1^2}v+O(1)v \nonumber \\
&=2+O(x_1)+ (2x_1- 1) (\frac{2}{x_1}+O(x_1) )-\frac{2}{x_1^2} \{ x_1^2-x_1\}   \nonumber \\
&=4+O(x_1)>0 , \nonumber
\end{align}
in the last inequality above we use $x_1\leq r_0<< 1$. 

Now there is $r_0> 0$ and $v\in C^\infty(U_{r_0}(N^{n- 1}))$ satisfying
\begin{align}
\mathscr{L}_u(v)(x)> 0,   \quad \quad \quad \forall x\in U_{r_0}^-(N^{n- 1}). \label{L of v}
\end{align}

\textbf{Step (3)}.  For any $q\in N^{n- 1}$,  let $\vec{n}= \partial x_1$ be the outer normal unit vector of $\partial U_{r_0}^-(N^{n- 1})$ at $q$. Using (\ref{Taylor of u in nbhd of N}) we can compute directly that 
\begin{align}
\frac{\partial F}{\partial\vec{n}}(q)= 0. \label{normal deri of F is 0}
\end{align}

We will show that there is $z\in U_{r_0}^-(N^{n- 1})$ such that $F(z)= F(q)= 0$. By contradiction, otherwise
\begin{align}
F(z)< F(q)= 0, \quad \quad \quad \forall z\in U_{r_0}^-(N^{n- 1}). \label{strict smaller inside}
\end{align}

From Lemma \ref{lem Laplace of gradient of angles}, we have
\begin{align}
\mathscr{L}_u(F)\geq 0. \label{subharmonic condition-F}
\end{align}

Now we consider $F+ \epsilon\cdot v$. Because of (\ref{strict smaller inside}), we can choose $\epsilon> 0$ such that 
\begin{align}
(F+ \epsilon  v)\big|_{x_1= -r_0}\leq 0. \label{one boundary condition}
\end{align}

Then we have
\begin{align}
(F+ \epsilon v)\big|_{\partial U_{r_0}^{-}(N^{n- 1})}\leq 0. \nonumber 
\end{align}

By (\ref{L of v}),  we get $\mathscr{L}_u(F+ \epsilon\cdot v)\big|_{U_{r_0}^{-}(N^{n- 1})} > 0$,  then from the Maximum principle,  we get
\begin{align}
\sup_{x\in U_{r_0}^{-}(N^{n- 1})} (F+ \epsilon v)(x)\leq \max\{0, \sup_{x\in \partial U_{r_0}^{-}(N^{n- 1})}(F+ \epsilon v)(x)\}= 0. \nonumber  
\end{align}

Note $(F+ \epsilon v)(q)= 0$,  then we obtain
\begin{align}
\frac{\partial}{\partial x_1}\big|_{x= q}(F+ \epsilon v)(x)\geq 0. \nonumber
\end{align}

Combining (\ref{C0 assumption of v}), this yields
\begin{align}
\frac{\partial}{\partial x_1}\big|_{x= q}F(x)\geq -\epsilon \frac{\partial}{\partial x_1}\big|_{x= q}v(x)= \epsilon> 0. \nonumber  
\end{align}
This contradicts (\ref{normal deri of F is 0}). 

Therefore, we find $z\in U_{r_0}^-(N^{n- 1})$ such that $F(z)= F(q)= 0$, which implies $\displaystyle F(z)= \max_{x\in \overline{U_{r_0}^-(N^{n- 1})}}F(x)= 0$. 
Note $|\nabla u(z)|>0$, $|u(z)|<1$ and $|\nabla \sin^{-1}u(z)|^2=\lambda$;  from Proposition \ref{prop the isometric map,4.23}, we get the extended splitting domain on $M^n$. 

The main conclusion follows by the above and the induction on $\mathcal{N}(u)$.
}

\qed

\section*{Acknowledgments}
We thank Xiaodong Wang for his comments on the early version of this paper.  

\begin{bibdiv}
\begin{biblist}

\bib{AC}{article}{
    AUTHOR = {Andrews, Ben},
    author= {Clutterbuck, Julie},
     TITLE = {Proof of the fundamental gap conjecture},
   JOURNAL = {J. Amer. Math. Soc.},
  FJOURNAL = {Journal of the American Mathematical Society},
    VOLUME = {24},
      YEAR = {2011},
    NUMBER = {3},
     PAGES = {899--916},
      ISSN = {0894-0347,1088-6834},
   MRCLASS = {35J10 (35P15)},
  MRNUMBER = {2784332},
MRREVIEWER = {J.\ B.\ Kennedy},
       DOI = {10.1090/S0894-0347-2011-00699-1},
       URL = {https://doi.org/10.1090/S0894-0347-2011-00699-1},
}

\bib{AX}{article}{
    AUTHOR = {Andrews, Ben},
    author= {Xiong, Changwei},
     TITLE = {Gradient estimates via two-point functions for elliptic
              equations on manifolds},
   JOURNAL = {Adv. Math.},
  FJOURNAL = {Advances in Mathematics},
    VOLUME = {349},
      YEAR = {2019},
     PAGES = {1151--1197},
      ISSN = {0001-8708,1090-2082},
   MRCLASS = {58J05 (35B45 35B65 35J62 35R01 53C21)},
  MRNUMBER = {3946303},
MRREVIEWER = {Yunyan\ Yang},
       DOI = {10.1016/j.aim.2019.04.041},
       URL = {https://doi.org/10.1016/j.aim.2019.04.041},
}

\bib{CG}{article}{
    AUTHOR = {Cheeger, Jeff},
    author={Gromoll, Detlef},
     TITLE = {The splitting theorem for manifolds of nonnegative {R}icci
              curvature},
   JOURNAL = {J. Differential Geometry},
  FJOURNAL = {Journal of Differential Geometry},
    VOLUME = {6},
      YEAR = {1971/72},
     PAGES = {119--128},
      ISSN = {0022-040X},
   MRCLASS = {53C20},
  MRNUMBER = {303460},
MRREVIEWER = {J. R. Vanstone},
       URL = {http://projecteuclid.org/euclid.jdg/1214430220},
}

\bib{CG72}{article} {
    AUTHOR = {Cheeger, Jeff  },
    author={Gromoll, Detlef},
     TITLE = {On the structure of complete manifolds of nonnegative
              curvature},
   JOURNAL = {Ann. of Math. (2)},
  FJOURNAL = {Annals of Mathematics. Second Series},
    VOLUME = {96},
      YEAR = {1972},
     PAGES = {413--443},
      ISSN = {0003-486X},
   MRCLASS = {53C20},
  MRNUMBER = {309010},
MRREVIEWER = {J.\ A.\ Wolf},
       DOI = {10.2307/1970819},
       URL = {https://doi.org/10.2307/1970819},
}
\bib{Colding}{article}{
    author={Colding, Tobias H.},
    title={New monotonicity formulas for Ricci curvature and applications; I},
    journal={Acta Math.},
    volume={209},
    date={2012},
    number={2},
    pages={229-263},
}

\bib{CY}{article}{
   author={Cheng, Shiu Yuen},
   author={Yau, Shing-Tung},
   title={Differential equations on Riemannian manifolds and their geometric applications},
   journal={Comm. Pure Appl. Math.},
    volume={28},
    date={1975},
    number={3},
    pages={333--354},
   }

	
\bib{GT}{book}{
    AUTHOR = {Gilbarg, David},
    author= {Trudinger, Neil S.},
     TITLE = {Elliptic partial differential equations of second order},
    SERIES = {Classics in Mathematics},
      NOTE = {Reprint of the 1998 edition},
 PUBLISHER = {Springer-Verlag, Berlin},
      YEAR = {2001},
     PAGES = {xiv+517},
      ISBN = {3-540-41160-7},
   MRCLASS = {35-02 (35Jxx)},
  MRNUMBER = {1814364},
}

\bib{HW}{article} {
    AUTHOR = {Hang, Fengbo },
    author={Wang, Xiaodong}, 
     TITLE = {A remark on {Z}hong-{Y}ang's eigenvalue estimate},
   JOURNAL = {Int. Math. Res. Not. IMRN},
  FJOURNAL = {International Mathematics Research Notices. IMRN},
      YEAR = {2007},
    NUMBER = {18},
     PAGES = {Art. ID rnm064, 9},
      ISSN = {1073-7928,1687-0247},
   MRCLASS = {53C21 (58J50)},
  MRNUMBER = {2358887},
MRREVIEWER = {Fr\'{e}d\'{e}ric\ Robert},
       DOI = {10.1093/imrn/rnm064},
       URL = {https://doi.org/10.1093/imrn/rnm064},
}

\bib{HXY}{article}{
    AUTHOR = {Hu, Qixuan},
    author= {Xu, Guoyi},
    author= {Yu, Chengjie},
     TITLE = {The rigidity and stability of gradient estimates},
   JOURNAL = {J. Geom. Anal.},
  FJOURNAL = {Journal of Geometric Analysis},
    VOLUME = {32},
      YEAR = {2022},
    NUMBER = {11},
     PAGES = {Paper No. 279, 13},
      ISSN = {1050-6926,1559-002X},
   MRCLASS = {35A01 (58J05)},
  MRNUMBER = {4478474},
MRREVIEWER = {Luis\ F.\ Ragognette},
       DOI = {10.1007/s12220-022-01022-x},
       URL = {https://tlink.lib.tsinghua.edu.cn:443/https/443/org/doi/yitlink/10.1007/s12220-022-01022-x},
}	

\bib{Kroger}{article}{
    AUTHOR = {Kr\"{o}ger, Pawel},
     TITLE = {On the spectral gap for compact manifolds},
   JOURNAL = {J. Differential Geom.},
  FJOURNAL = {Journal of Differential Geometry},
    VOLUME = {36},
      YEAR = {1992},
    NUMBER = {2},
     PAGES = {315--330},
      ISSN = {0022-040X,1945-743X},
   MRCLASS = {58G25 (53C25)},
  MRNUMBER = {1180385},
MRREVIEWER = {Johan\ Tysk},
       URL = {http://projecteuclid.org/euclid.jdg/1214448744},
}  	
\bib{Li12}{book} {
    AUTHOR = {Li, Peter},
     TITLE = {Geometric analysis},
    SERIES = {Cambridge Studies in Advanced Mathematics},
    VOLUME = {134},
 PUBLISHER = {Cambridge University Press, Cambridge},
      YEAR = {2012},
     PAGES = {x+406},
      ISBN = {978-1-107-02064-1},
   MRCLASS = {58-02 (35P15 53C21 58J32 58J35)},
  MRNUMBER = {2962229},
MRREVIEWER = {Fr\'{e}d\'{e}ric\ Robert},
       DOI = {10.1017/CBO9781139105798},
       URL = {https://doi.org/10.1017/CBO9781139105798},
}

\bib{Li}{article}{
    AUTHOR = {Li, Peter},
     TITLE = {A lower bound for the first eigenvalue of the {L}aplacian on a
              compact manifold},
   JOURNAL = {Indiana Univ. Math. J.},
  FJOURNAL = {Indiana University Mathematics Journal},
    VOLUME = {28},
      YEAR = {1979},
    NUMBER = {6},
     PAGES = {1013--1019},
      ISSN = {0022-2518,1943-5258},
   MRCLASS = {58G25},
  MRNUMBER = {551166},
MRREVIEWER = {Shiu-Yuen\ Cheng},
       DOI = {10.1512/iumj.1979.28.28075},
       URL = {https://doi.org/10.1512/iumj.1979.28.28075},
}

\bib{LY}{incollection}{
    AUTHOR = {Li, Peter},
    author= {Yau, Shing Tung},
     TITLE = {Estimates of eigenvalues of a compact {R}iemannian manifold},
 BOOKTITLE = {Geometry of the {L}aplace operator ({P}roc. {S}ympos. {P}ure
              {M}ath., {U}niv. {H}awaii, {H}onolulu, {H}awaii, 1979)},
    SERIES = {Proc. Sympos. Pure Math.},
    VOLUME = {XXXVI},
     PAGES = {205--239},
 PUBLISHER = {Amer. Math. Soc., Providence, RI},
      YEAR = {1980},
      ISBN = {0-8218-1439-7},
   MRCLASS = {58G25 (53C20)},
  MRNUMBER = {573435},
MRREVIEWER = {P.\ G\"{u}nther},
}

\bib{Ni}{article}{
    AUTHOR = {Ni, Lei},
     TITLE = {Estimates on the modulus of expansion for vector fields
              solving nonlinear equations},
   JOURNAL = {J. Math. Pures Appl. (9)},
  FJOURNAL = {Journal de Math\'{e}matiques Pures et Appliqu\'{e}es.
              Neuvi\`eme S\'{e}rie},
    VOLUME = {99},
      YEAR = {2013},
    NUMBER = {1},
     PAGES = {1--16},
      ISSN = {0021-7824,1776-3371},
   MRCLASS = {53C21 (35B50 35P15)},
  MRNUMBER = {3003280},
MRREVIEWER = {Qiaoling\ Xia},
       DOI = {10.1016/j.matpur.2012.05.009},
       URL = {https://doi.org/10.1016/j.matpur.2012.05.009},
}

\bib{PW}{book}{
    AUTHOR = {Protter, Murray H.},
    author= {Weinberger, Hans F.},
     TITLE = {Maximum principles in differential equations},
      NOTE = {Corrected reprint of the 1967 original},
 PUBLISHER = {Springer-Verlag, New York},
      YEAR = {1984},
     PAGES = {x+261},
      ISBN = {0-387-96068-6},
   MRCLASS = {35B50 (35-01)},
  MRNUMBER = {762825},
       DOI = {10.1007/978-1-4612-5282-5},
       URL = {https://doi.org/10.1007/978-1-4612-5282-5},
}

\bib{Sakai}{incollection}{
    AUTHOR = {Sakai, Takashi},
     TITLE = {Curvature---up through the twentieth century, and into the
              future? [translation of {S}\={u}gaku {\bf 54} (2002), no. 3,
              292--307; MR1929898]},
      NOTE = {Sugaku Expositions},
   JOURNAL = {Sugaku Expositions},
  FJOURNAL = {Sugaku Expositions},
    VOLUME = {18},
      YEAR = {2005},
    NUMBER = {2},
     PAGES = {165--187},
      ISSN = {0898-9583,2473-585X},
   MRCLASS = {53-03 (53C20)},
  MRNUMBER = {2182883},
}

\bib{SW}{article}{
    AUTHOR = {Sung, Chiung-Jue Anna},
    author= {Wang, Jiaping},
     TITLE = {Sharp gradient estimate and spectral rigidity for
              {$p$}-{L}aplacian},
   JOURNAL = {Math. Res. Lett.},
  FJOURNAL = {Mathematical Research Letters},
    VOLUME = {21},
      YEAR = {2014},
    NUMBER = {4},
     PAGES = {885--904},
      ISSN = {1073-2780,1945-001X},
   MRCLASS = {58J50 (35B65 35J92)},
  MRNUMBER = {3275651},
MRREVIEWER = {Leonid\ Friedlander},
       DOI = {10.4310/MRL.2014.v21.n4.a14},
       URL = {https://tlink.lib.tsinghua.edu.cn:443/https/443/org/doi/yitlink/10.4310/MRL.2014.v21.n4.a14},
}

\bib{Xu-growth}{article}{
    AUTHOR = {Xu, Guoyi},
     TITLE = {The growth rate of harmonic functions},
   JOURNAL = {J. Lond. Math. Soc. (2)},
  FJOURNAL = {Journal of the London Mathematical Society. Second Series},
    VOLUME = {101},
      YEAR = {2020},
    NUMBER = {3},
     PAGES = {1298--1319},
      ISSN = {0024-6107,1469-7750},
   MRCLASS = {58J05 (31C05 35J05 35R01)},
  MRNUMBER = {4111942},
MRREVIEWER = {Gabjin\ Yun},
       DOI = {10.1112/jlms.12306},
       URL = {https://tlink.lib.tsinghua.edu.cn:443/https/443/org/doi/yitlink/10.1112/jlms.12306},
}

\bib{Yang}{article}{
    AUTHOR = {Yang, DaGang},
     TITLE = {Lower bound estimates of the first eigenvalue for compact
              manifolds with positive {R}icci curvature},
   JOURNAL = {Pacific J. Math.},
  FJOURNAL = {Pacific Journal of Mathematics},
    VOLUME = {190},
      YEAR = {1999},
    NUMBER = {2},
     PAGES = {383--398},
      ISSN = {0030-8730,1945-5844},
   MRCLASS = {53C21 (53C20 58J50)},
  MRNUMBER = {1722898},
MRREVIEWER = {William\ P.\ Minicozzi, II},
       DOI = {10.2140/pjm.1999.190.383},
       URL = {https://doi.org/10.2140/pjm.1999.190.383},
}

\bib{Yau}{article}{
    AUTHOR = {Yau, Shing Tung},
     TITLE = {Harmonic functions on complete {R}iemannian manifolds},
   JOURNAL = {Comm. Pure Appl. Math.},
  FJOURNAL = {Communications on Pure and Applied Mathematics},
    VOLUME = {28},
      YEAR = {1975},
     PAGES = {201--228},
      ISSN = {0010-3640},
   MRCLASS = {53C20 (31C05)},
  MRNUMBER = {0431040},
MRREVIEWER = {Yoshiaki Maeda},
       DOI = {10.1002/cpa.3160280203},
       URL = {http://dx.doi.org/10.1002/cpa.3160280203},
}

\bib{ZY}{article}{
    AUTHOR = {Zhong, Jia Qing},
    author = {Yang, Hong Cang},
     TITLE = {On the estimate of the first eigenvalue of a compact
              {R}iemannian manifold},
   JOURNAL = {Sci. Sinica Ser. A},
  FJOURNAL = {Scientia Sinica. Series A. Mathematical, Physical,
              Astronomical \& Technical Sciences},
    VOLUME = {27},
      YEAR = {1984},
    NUMBER = {12},
     PAGES = {1265--1273},
      ISSN = {0253-5831},
   MRCLASS = {58G25},
  MRNUMBER = {794292},
MRREVIEWER = {Domenico Perrone},
}

\end{biblist}
\end{bibdiv}

\end{document}